\documentclass[10pt]{amsart}
\usepackage{amssymb,amsmath,txfonts,amsthm}
\usepackage{mathrsfs}
\usepackage{hyperref}
\usepackage{mathrsfs}
\newcommand\blfootnote[1]{%
  \begingroup
  \renewcommand\thefootnote{}\footnote{#1}%
  \addtocounter{footnote}{-1}%
  \endgroup
}
\newtheorem{theorem}{Theorem}
\newtheorem{prop}{Proposition}
\newtheorem{lemma}{Lemma}
\newtheorem{remark}{Remark}
\newtheorem{claim}{Claim}
\newtheorem{definition}{Definition}
\newtheorem{cor}{Corollary}
\newtheorem{conj}{Conjecture}

\numberwithin{equation}{section}
\numberwithin{theorem}{section}
\numberwithin{definition}{section}
\numberwithin{cor}{section}
\numberwithin{prop}{section}
\numberwithin{remark}{section}
\numberwithin{claim}{section}
\numberwithin{lemma}{section}

\def\Xint#1{\mathchoice
  {\XXint\displaystyle\textstyle{#1}}%
  {\XXint\textstyle\scriptstyle{#1}}%
  {\XXint\scriptstyle\scriptscriptstyle{#1}}%
  {\XXint\scriptscriptstyle\scriptscriptstyle{#1}}%
  \!\int}
\def\XXint#1#2#3{{\setbox0=\hbox{$#1{#2#3}{\int}$}
  \vcenter{\hbox{$#2#3$}}\kern-.5\wd0}}

\def\dashint{\Xint-}

\author{Gang Liu}
\address{Department of Mathematics\\Northwestern University\\Evanston, IL 60208}
\email{gang.liu@northwestern.edu}
\title[Uniformization]{On Yau's uniformization conjecture}
\date{}
\begin{document}
\begin{abstract}
Let $M^n$ be a complete noncompact K\"ahler manifold with nonnegative bisectional curvature and maximal volume growth, we prove that $M$ is biholomorphic to $\mathbb{C}^n$. 
This confirms the uniformization conjecture of Yau when $M$ has maximal volume growth.
\end{abstract}
\maketitle
\section{\bf{Introduction}}
\blfootnote{The author was partially supported by NSF grant DMS 1406593.}
In \cite{[Y1]}, Yau proposed the uniformization conjecture:
\begin{conj}\label{conj1}
Let $M^n$ be a complete noncompact K\"ahler manifold with positive bisectional curvature. Then $M$ is biholomorphic to $\mathbb{C}^n$.
\end{conj}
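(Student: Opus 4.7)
The plan is to produce $n$ holomorphic functions $f_1,\ldots,f_n$ of linear growth on $M$ whose joint map $F=(f_1,\ldots,f_n)\colon M\to\mathbb{C}^n$ is proper and of degree one; any such $F$ is automatically a biholomorphism, since proper finite holomorphic maps between complex manifolds of the same dimension and degree one are isomorphisms.

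First I would analyze the asymptotic geometry of $M$. By Cheeger--Colding theory, maximal volume growth together with $\mathrm{Ric}\geq 0$ forces every tangent cone at infinity $(M_\infty,o_\infty)$ to be a metric cone $C(Y)$. Combining nonnegative bisectional curvature with a three-circle (log-convexity) monotonicity for holomorphic functions on $M$, one expects each such tangent cone to inherit a normal complex analytic structure and to be a Stein affine variety with a $\mathbb{C}^*$-action whose regular part is K\"ahler Ricci-flat. The central rigidity step is to show that in fact the cone is $(\mathbb{C}^n,\omega_{\mathrm{flat}})$, equivalently that the link $Y$ is a round sphere of appropriate radius.

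Second, taking this rigidity for granted, I would transplant coordinate functions from the cone back to $M$ by H\"ormander's $L^2$-estimate for $\bar\partial$, using a plurisubharmonic weight of the form $(n+\epsilon)\log(\rho^2+1)$ built from a smoothed K\"ahler potential for distance squared. Working on a sequence of scales $r_j\to\infty$ on which $B(p,r_j)$ is Gromov--Hausdorff close to a Euclidean ball, one pulls back Euclidean coordinates under the approximation, solves $\bar\partial$ to correct these to genuine holomorphic functions, and passes to a subsequential limit; the three-circle theorem controls the growth uniformly in $j$ and yields $f_1,\ldots,f_n$ of linear growth on $M$ that are asymptotic at infinity to the cone coordinates. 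This asymptotic matching ensures both properness of $F$ and that $\deg F=1$.

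The main obstacle is the rigidity step, ruling out singular or exotic tangent cones so as to force $Y=S^{2n-1}$. I expect this to be handled by combining the Cheeger--Colding splitting theorem on the cone, K\"ahler identities on its regular part, and a sharp lower bound on $\dim\mathcal O_d(M)$ for the space of polynomial growth holomorphic functions, extracted from the H\"ormander construction itself and matched against the Euclidean count $\binom{n+d}{n}$ via Bishop-type volume comparison; rigidity in these comparisons then collapses $Y$ to the sphere. A second, more technical difficulty is reconciling the nonsmoothness of the distance function with the plurisubharmonic potentials required by the $L^2$ method, which one overcomes by smooth K\"ahler approximations compatible with the curvature hypothesis.
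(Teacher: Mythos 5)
Your plan assumes maximal volume growth throughout, which is the hypothesis of the paper's Theorem 1.2 (the form of the conjecture actually proved), so that much is consistent. But the step you yourself flag as the "main obstacle" is in fact a genuine and, in the paper's own account, insurmountable gap for general $n$: proving that the tangent cone at infinity is complex-analytically smooth (let alone that the link is a round sphere) is precisely the regularity the author's original approach required, and Remark 1.5 explains that this regularity is equivalent to a conjecture of Shokurov on isolated singularities (via Mclean's work \cite{[Mc]}), which is known only for dimension at most $3$. The structure theorem the paper does prove (Theorem 1.4) only gives that the singular set of the limit cone has complex codimension $\geq 4$; for $n\geq 4$ nothing in your outline---Cheeger--Colding splitting, $L^2$ dimension counts against $\binom{n+d}{n}$, Bishop rigidity---rules out such a singularity, and the stronger claim that $Y=S^{2n-1}$ is stronger still, since even an analytically smooth cone metric need not be flat.

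The paper deliberately bypasses this by an entirely different mechanism. Rather than identifying the cone and transplanting coordinates, it perturbs $\nabla(\tfrac12 r^2)-\sqrt{-1}J\nabla(\tfrac12 r^2)$ by solving a $\bar\partial$-equation with values in $T^{1,0}M$ (using that $T^{1,0}M\otimes K^{-1}$ becomes Nakano positive after twisting by a strictly psh weight) to produce a holomorphic vector field $Z_i$ on a large geodesic ball, shows that the flow of $-\mathrm{Re}\,Z_i$ retracts the ball to a single point (Propositions 3.1, 3.2), passes to a global integrable holomorphic vector field $X$ on $M$ whose linearization at the fixed point has eigenvalues with strictly negative real parts, and finally invokes the Poincar\'e--Dulac normal form to write down an explicit biholomorphism with $\mathbb{C}^n$. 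This never requires tangent-cone smoothness, which is why the dimension restriction disappears. Your coordinate-transplanting approach is much closer in spirit to \cite{[L3]}, which only yields that $M$ is biholomorphic to an affine algebraic variety; upgrading that to $\mathbb{C}^n$ via a properness-and-degree-one argument is exactly what is missing there and is not supplied by your sketch.
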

Conjecture \ref{conj1} is open so far. However, there has been much important progress. In earlier works, Mok-Siu-Yau \cite{[MSY]} and Mok \cite{[Mo]} considered embedding by using holomorphic functions of polynomial growth. Later, with the K\"ahler-Ricci flow, results were improved significantly. For example, in \cite{[CT]}, A. Chau and L. F. Tam proved that a complete noncompact K\"ahler manifold with bounded nonnegative bisectional curvature and maximal volume growth is biholomorphic to $\mathbb{C}^n$.
See also \cite{[Shi1]}\cite{[Shi2]}\cite{[CZ]}\cite{[N2]}\cite{[CT]}\cite{[CT1]}\cite{[CT2]}\cite{[CTZ]}\cite{[HT]}\cite{[He]}\cite{[LT]} for related works.

In \cite{[L1]}-\cite{[L5]}, we introduced a new method to study the conjecture. The basic strategy is to consider the Gromov-Hausdorff limit of K\"ahler manifolds with bisectional curvature lower bound. For instance, in \cite{[L3]}, it was proved that if a complete noncompact K\"ahler manifold has nonnegative bisectional curvature and maximal volume growth, then it is biholomorphic to an affine algebraic variety. In this paper, we shall continue to study the conjecture by this strategy. The main theorem is the following:

\begin{theorem}\label{thm1}
Let $M^n$ be a complete noncompact K\"ahler manifold with nonnegative bisectional curvature. Assume $M$ has maximal volume growth, then we can find $n$ polynomial growth holomorphic functions $f_1, .., f_n$ which give a biholomorphism from $M$ to $\mathbb{C}^n$.
\end{theorem}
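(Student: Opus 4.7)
The plan is to refine the construction from the author's earlier work \cite{[L3]}, where $M$ is shown to be biholomorphic to a normal affine algebraic variety via finitely many polynomial growth holomorphic functions, and to cut the number of generators down to exactly $n$. The target is to produce $n$ degree-one polynomial growth holomorphic functions whose joint map is a biholomorphism to $\mathbb{C}^n$.

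First, I would analyze the tangent cone at infinity $M_\infty$. Under maximal volume growth (hence noncollapsing), Cheeger--Colding gives $M_\infty = C(Y)$ as a metric cone; under nonnegative bisectional curvature, the regular set inherits a K\"ahler structure, and by the earlier papers in this series the cone is a normal affine variety of complex dimension $n$. The natural scaling $\mathbb{C}^*$-action on $M_\infty$ yields a distinguished space of ``linear'' (degree-one homogeneous) holomorphic functions, and the maximal-volume-growth assumption forces the dimension of this space to be at least $n$. So on the cone we have the desired $n$ algebraically independent linear holomorphic functions.

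Second, I would invoke the three-circle theorem for holomorphic functions on $M$ (a key tool from the author's earlier papers): $\log \sup_{B_r}|f|$ is convex in $\log r$ for any holomorphic $f$. This provides sharp dimension bounds on the spaces $\mathcal{O}_d(M)$ of polynomial growth holomorphic functions of degree at most $d$. Combined with Gromov--Hausdorff approximations of large annular regions in $M$ by annular regions in $M_\infty$, one can lift the $n$ linear holomorphic functions on $M_\infty$ to polynomial growth holomorphic functions $f_1,\ldots,f_n$ on $M$ of degree at most $1$. The lift is carried out by solving $\bar\partial$ on $M$ via H\"ormander $L^2$-estimates on smooth cut-offs of the pulled-back linear functions, and then using three-circles to promote the solution into the correct growth class without losing any degree.

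Third, with such $f_1,\ldots,f_n$ in hand, I would show that $F=(f_1,\ldots,f_n)\colon M\to \mathbb{C}^n$ is a biholomorphism. Properness follows from the three-circle theorem, which forces $|F|$ to grow comparably to $d(\cdot,p_0)$. The Jacobian determinant $J_F$ is itself a polynomial growth holomorphic function of degree $0$; the asymptotic volume ratio of $M$ matches that of $\mathbb{C}^n$ (comparing volumes under $F$ of large balls with Euclidean balls of the same radius via the tangent cone), which forces $J_F$ to be a nonzero constant. Properness together with nowhere-vanishing Jacobian gives a proper holomorphic covering, and the volume count shows the covering has degree one, so $F$ is a biholomorphism.

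The main obstacle is the lifting step. Transferring the homogeneous holomorphic functions from a possibly singular tangent cone to exact holomorphic functions on $M$ with sharp growth control is delicate: one must cut off carefully across the singular stratum of $M_\infty$ (Hausdorff codimension $\ge 4$ but possibly nonempty), and iterate the H\"ormander estimate so that no logarithmic loss occurs in the growth degree. The equality case of the Ni--type dimension bound $\dim \mathcal{O}_d(M)\le \dim \mathcal{O}_d(\mathbb{C}^n)$, and the rigidity it induces, is where the bulk of the technical work is expected to lie, because we must know not merely that some linear functions lift, but that enough lift to span an $n$-dimensional space of genuinely linear growth with sharply prescribed asymptotic behavior.
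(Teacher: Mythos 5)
Your proposal diverges fundamentally from the paper's route and, as written, has a gap at the very first step that is not easily repaired.

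The crucial difficulty is your claim that "the maximal-volume-growth assumption forces the dimension of [the space of degree-one homogeneous holomorphic functions on $M_\infty$] to be at least $n$." This is false in general. On a Kähler metric cone $M_\infty = \mathbb{C}^k \times C(Y')$, the degree-one homogeneous holomorphic functions are precisely the $k$ linear coordinates on the $\mathbb{C}^k$ factor, so $\dim \mathcal{O}_1(M_\infty) = 1 + k$, and $k$ can be anything from $0$ to $n$. Unless one already knows $M_\infty \cong \mathbb{C}^n$ isometrically — which is emphatically not part of the hypotheses and in fact would essentially beg the conclusion — there need not be $n$ linear functions to lift. Maximal volume growth controls $\dim \mathcal{O}_d(M)$ for all $d$ (via the three-circle theorem and the count on the cone), but it gives $\dim \mathcal{O}_1(M) \geq n+1$ only when the cone is flat. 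Consequently, the map $F = (f_1,\ldots,f_n)$ you propose cannot even be constructed in general, and the whole chain (properness from growth one, constant Jacobian of degree zero, degree-one covering) relies on precisely this nonexistent supply of linear functions. Note that the theorem's conclusion does not promise degree-one generators either; the biholomorphism is built from polynomial growth functions whose degrees reflect the (possibly $>1$) homogeneity weights on the cone.

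The paper's actual argument avoids this by never trying to embed $M$ by low-degree functions directly. Instead it constructs a global \emph{holomorphic vector field} $X$ on $M$ preserving the filtration $\{\mathcal{O}_d(M)\}$, whose flow retracts $M$ to a single point $o$, and whose limit on the tangent cone is the Euler field $-r\partial_r$. The key technical input is the $\bar\partial$-argument for $T^{1,0}M$ valued forms (Section 3) to manufacture local retracting holomorphic vector fields near cone-like scales, and then the linear-algebra/three-circle argument (Claim 6.2) to locate the Euler field in the span of such vector fields. Once $X$ is available, the Poincar\'e--Dulac normal form near the attracting fixed point $o$ gives local coordinates $z_1,\ldots,z_n$, and matching the flow of $X$ on $M$ with the flow of the corresponding polynomial vector field on $\mathbb{C}^n$ produces a global biholomorphism; a final induction on the eigenvalue weights shows each $z_j$ is of polynomial growth. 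This buys exactly what your approach lacks: it works even when $\mathcal{O}_1(M)$ is trivial, because the coordinates come out with degree $\mathrm{Re}\,\lambda_j \geq 1$, not degree one, and their existence is guaranteed by the dynamics of the retraction rather than by an a priori supply of linear functions on the cone.
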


We shall also study the case when the bisectional curvature has a lower bound.
\begin{definition}\cite{[LW]} \cite{[TY]}
On a K\"ahler manifold $M^n$, we say the bisectional curvature is greater than or equal to $K$ ($BK\geq K$), if
\begin{equation}
\frac{R(X, \overline{X}, Y, \overline{Y})}{||X||^2||Y||^2+|\langle X, \overline{Y}\rangle|^2}\geq K 
\end{equation}
for any two nonzero vectors $X, Y\in T^{1, 0}M$.\end{definition} Observe that the equality holds for complex space forms.
The bisectional curvature lower bound condition is weaker than the sectional curvature lower bound, while stronger than the Ricci curvature lower bound.

\begin{theorem}\label{thm2}
Let $(M^n_i, p_i)$ be a sequence of complete (compact or noncompact) K\"ahler manifolds with bisectional curvature lower bound $-1$. Assume $vol(B(p_i, 1))\geq v>0$. Suppose $(X, p)$ is the pointed Gromov-Hausdorff limit of $(M_i, p_i)$. Then 
\begin{itemize}
\item $(X, p)$ is homeomorphic to a normal complex analytic space with singularity of complex codimension at least $4$.  In particular, if $n\leq 3$, $X$ is a complex manifold. 
\item $X$ is homeomorphic to a manifold. 
If the diameters are uniformly bounded, $X$ is homeomorphic to $M_i$ for all large $i$. 
\end{itemize}
\end{theorem}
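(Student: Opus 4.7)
The plan is to describe $X$ as a Cheeger-Colding Ricci limit space, build a complex analytic structure on its regular part by solving $\bar\partial$ on the $M_i$ with H\"ormander $L^2$-estimates in the spirit of \cite{[L1]}--\cite{[L5]}, and then use the bisectional curvature lower bound (strictly stronger than a Ricci lower bound) to sharpen the size of the singular stratum and to trivialize its local topology.

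First, since $BK\geq -1$ implies $\mathrm{Ric}\geq -(n+1)$ and the noncollapsing hypothesis holds, Cheeger-Colding theory yields a decomposition $X=\mathcal R\sqcup\mathcal S$ with $\mathcal R$ open, of full Hausdorff $2n$-measure, and with tangent cones at points of $\mathcal R$ equal to $\mathbb R^{2n}$. Near a regular point I would follow the coordinate construction of the author's previous work: choose points $q_{i,1},\dots,q_{i,n}$ in $M_i$ converging to an orthonormal frame at the base, solve $\bar\partial u=\bar\partial\chi$ on small balls against a plurisubharmonic weight with log-poles at the $q_{i,j}$, and take a local limit of the resulting holomorphic maps. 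The bisectional curvature lower bound together with noncollapsing supplies the uniform gradient and Hessian estimates needed to promote this to a biholomorphism from a neighborhood of each $x\in\mathcal R$ onto an open set of $\mathbb C^n$, equipping $\mathcal R$ with a complex manifold structure.

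The hardest step, and the real core of the theorem, is upgrading the singular estimate from $\dim_{\mathcal H}\mathcal S\leq 2n-2$ to complex codimension at least $4$. My plan is a K\"ahler dimension reduction on tangent cones: using a splitting theorem refined for the K\"ahler setting, any tangent cone at $x\in\mathcal S$ is isometric to $\mathbb C^k\times C(Y)$ for an irreducible K\"ahler cone $C(Y)$ of complex dimension $n-k$. A standard stratification then reduces everything to a rigidity statement: an irreducible K\"ahler cone of complex dimension at most $3$ with $BK\geq 0$ and a unique apical singularity must be flat. I expect this rigidity to follow from a three-circle / slope estimate for polynomial-growth holomorphic functions on the cone combined with the equality case of the bisectional-curvature comparison; any failure would produce a nontrivial degree-one holomorphic splitting, contradicting irreducibility. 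Once complex codimension $\geq 4$ is in hand, the complex structure extends across $\mathcal S$ and produces a normal analytic structure by Riemann / Hartogs-type extension, and for $n\leq 3$ the singular set is automatically empty.

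For the second bullet, note that complex codimension $4$ is real codimension $8$; after iterated splitting at a singular point one reduces to showing that the link of the irreducible factor of the tangent cone is a standard topological sphere. This follows from the Alexandrov sphere theorem applied to the link (which has curvature $\geq 1$ by the cone structure and whose lower-dimensional strata are controlled by the same induction), so a punctured neighborhood of any singular point is homeomorphic to a Euclidean ball and $X$ is a topological manifold. Given this, the bounded-diameter statement $X\cong M_i$ for large $i$ is a direct application of Cheeger-Colding / Perelman topological stability in the noncollapsed Ricci-limit setting.
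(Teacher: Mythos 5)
Your high-level description of $X$ as a noncollapsed Ricci limit with a complex structure on its regular part is consistent with the paper, but the three load-bearing steps — the codimension $\geq 4$ estimate, the topological manifold statement, and the stability statement — are all handled by arguments that do not actually work, and you miss the central new construction of the paper: the retracting holomorphic vector field $Z_i$ (Proposition \ref{prop4}) built by solving $\bar\partial$ on the \emph{holomorphic tangent bundle} with a Griffiths/Nakano-positivity argument. That vector field is what lets the paper compare the link of a tangent-cone factor with a standard sphere via a gradient-like flow, and everything downstream depends on it.

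Concretely: your proposed rigidity statement, that an irreducible K\"ahler cone of complex dimension $\leq 3$ with an isolated apex must be flat because ``the equality case of the bisectional-curvature comparison'' would otherwise produce a degree-one splitting, is not a theorem and there is no reason to expect it from three-circle estimates alone. The tangent cone is a singular metric space, not a smooth K\"ahler manifold, so you cannot run a tensorial bisectional comparison on it. What the paper actually uses, after establishing that the link $A$ of the irreducible factor $\Sigma$ is a smooth contact hypersurface transverse to $-r\partial_r$ and contactomorphic (via Proposition \ref{prop11}) to the boundary of a small strictly pseudoconvex ball, is Mumford's criterion \cite{[Mu]} in complex dimension $2$ (simply connected link $\Rightarrow$ smooth) and McLean's theorem \cite{[Mc]} in complex dimension $3$ (link contactomorphic to the standard $\mathbb{S}^5$ $\Rightarrow$ smooth). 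These are deep external inputs tied to the Shokurov conjecture; the dimension restriction $n\leq 3$ in the ``no singularity'' clause is precisely the current reach of that conjecture. Your proposal contains no mechanism that replaces them.

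Your argument for the second bullet is also wrong on its face: the link of a tangent cone at a singular point does \emph{not} have Alexandrov curvature $\geq 1$, because a bisectional curvature lower bound on $M_i$ does not give a sectional curvature lower bound, and the Cheeger-Colding structure only yields Ricci control on the cross-section. The paper instead shows the link $A_i$ is a regular level set of a strictly plurisubharmonic exhaustion flowing by $-\mathrm{Re}\,H_i$ to the unique fixed point, and concludes $A_i \cong \mathbb{S}^{2(n-k)-1}$ by Morse-theoretic/contact-topological reasoning (Lemma \ref{lm3}, Proposition \ref{prop11}), not by a sphere theorem. Similarly, ``direct application of Perelman topological stability in the Ricci-limit setting'' is not available — Perelman's stability theorem is for Alexandrov spaces with sectional lower bounds. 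The paper proves stability by hand: Propositions \ref{prop13} and \ref{prop14} set up a reverse induction over the splitting rank $k$, produce local homeomorphisms respecting the $\mathbb{D}^k$ factor via the holomorphic flows, and glue them with Proposition \ref{prop3} (the Kapovitch/Perelman gluing lemma for $\chi$-connected covers, which requires the explicit $\chi$-connectivity verification carried out there). None of this is ``direct.''

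In short, the missing idea is the $\bar\partial$-corrected holomorphic vector field on $T^{1,0}M_i$ and its limit $-r\partial_r$ on the tangent cone, together with the external algebro-geometric/contact inputs of Mumford and McLean. Your proposal substitutes for these a nonexistent rigidity theorem and an inapplicable Alexandrov sphere theorem, so the argument does not close.
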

\begin{remark}

In the topological sense, the second part is the complex analogue of Perelman's stability theorem \cite{[P]}\cite{[Ka]} for Riemannian manifolds with sectional curvature lower bound. If the bisectional curvature lower bound is replaced by two side bounds of Ricci curvature, then $X$ could have singularity of complex codimension $2$. See for example, \cite{[DS]}\cite{[Ti5]}.
\end{remark}
\begin{remark}
Our original approach to theorem \ref{thm1} is to prove sufficient regularity for the tangent cone. For instance, by using the the regularity result of theorem \ref{thm2}, 
 we can prove theorem \ref{thm1} for $n\leq 3$. It is interesting to note that the regularity in theorem \ref{thm2} is very closely related with a conjecture of Shokurov (\cite{[Sho]}, conjecture 2) in algebraic geometry.
 For instance, if Shokurov conjecture is true, then the limit space $X$ is complex analytically smooth. So far, the Shokurov conjecture is only solved for dimension less than or equal to three (this is responsible for the dimension restriction $n\leq 3$ for theorem \ref{thm1}).
For some details on the Shokurov conjecture, one can refer to \cite{[Mc]}. In the current version, we bypass the difficulty in algebraic geometry by using a different method.
\end{remark}

We follow the strategies in \cite{[L3]}\cite{[L4]}\cite{[DS]}\cite{[DS2]}.
The new thing is to solve $\overline\partial$ equation on the holomorphic tangent bundle. Eventually we construct a global integrable holomorphic vector field retracting to a point. This gives us the desired biholomorphism from $M$ to $\mathbb{C}^n$. For the proof of theorem \ref{thm2}, we need some algebro-geometric results by D. Mumford \cite{[Mu]} and M. Mclean \cite{[Mc]}

This paper is organized as follows.
Section $2$ is some basic preliminary results. In section $3$, we solve the $\overline\partial$ equation on the holomorphic tangent bundle. This requires the full power of nonnegativity of bisectional curvature.
Section $4$ is the immediate application to topology of complete K\"ahler manifolds with nonnegative bisectional curvature and maximal volume growth. The proof of theorem \ref{thm2} is presented in section $5$.
In the last section, we prove the main theorem \ref{thm1}.

\begin{center}
\bf  {\quad Acknowledgment}
\end{center}
The author would like to express his deep gratitude to Professors J. Lott and J. Wang for constant encouragements and discussions. He thanks Professors A. Chau, A. Naber, L. F. Tam and S. T. Yau for the interest. 
Finally, he thanks Professors J. P. Demailly, Y. Eliashberg, M. Mclean and Y. T. Siu for answering questions which are closely related to this paper.

\section{\bf{Preliminaries}}
For the basic theory of Gromov-Hausdorff convergence, we refer to \cite{[G]}. See also preliminaries of \cite{[L1]}-\cite{[L5]}.

H\"ormander $L^2$ theory:
The following result can be found on page $37$-$38$ of \cite{[D]}. 
\begin{prop}\label{prop1}
Let $(X, \omega)$ be a connected K\"ahler manifold which is not necessarily complete. Assume $X$ is Stein. Let $(F, h)$ be a Hermitian holomorphic vector bundle over $X$ ($h$ is the metric). Assume the curvature operator $A = [\sqrt{-1}\Theta_{F, h}, \Lambda]$ is positive definite everywhere on $\Lambda^{n, 1}T^*_X\otimes F$. Then for any form $g\in L^2(M, \Lambda^{n, 1}T^*_X\otimes F)$ satisfying $\overline\partial g = 0$ and $\int_X\langle A^{-1}g, g\rangle\omega^n <+\infty$, there exists $f\in L^2(X, \Lambda^{n, 0}T^*_X\otimes F)$ such that $\overline\partial f = g$ and $\int_X|f|^2\omega^n\leq\int_X\langle A^{-1}g, g\rangle\omega^n$.
\end{prop}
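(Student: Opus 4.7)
This is a version of H\"ormander's $L^2$ existence theorem for $\overline\partial$, and I would follow the classical Andreotti--Vesentini--H\"ormander scheme. The core ingredient is the Bochner--Kodaira--Nakano identity for $F$-valued $(n,1)$-forms,
\begin{equation*}
\|\overline\partial u\|^2 + \|\overline\partial^* u\|^2 \;\geq\; \int_X \langle Au, u\rangle\,\omega^n,
\end{equation*}
valid for smooth compactly supported $u$, where $A = [\sqrt{-1}\Theta_{F,h},\Lambda]$. This is a pointwise identity obtained from expanding the Nakano Laplacian and using the vanishing of the $\partial$-contribution at maximal antiholomorphic degree in the first slot. Combined with the hypothesis that $A$ is positive definite, it yields the basic a priori estimate.

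Given this, a standard duality argument produces $f$. Since $\overline\partial g = 0$, splitting $u \in \mathrm{Dom}(\overline\partial^*)$ into its $\ker\overline\partial$-component (where one applies Cauchy--Schwarz against $A^{\pm 1/2}$, combined with the a priori estimate) and its $\ker\overline\partial$-orthogonal complement (where $\langle g, \cdot\rangle$ vanishes by approximation using $\overline\partial g = 0$) gives
\begin{equation*}
|\langle g, u\rangle|^2 \;\leq\; \Bigl(\int_X \langle A^{-1}g, g\rangle\,\omega^n\Bigr)\cdot \|\overline\partial^* u\|^2.
\end{equation*}
Hence the linear form $\overline\partial^* u \mapsto \langle g, u\rangle$ on $\overline\partial^*(\mathrm{Dom}(\overline\partial^*)) \subset L^2(X,\Lambda^{n,0}T^*_X\otimes F)$ is well-defined and bounded with norm at most $(\int\langle A^{-1}g,g\rangle\,\omega^n)^{1/2}$. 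Extending by Hahn--Banach to the whole $L^2$ space and invoking Riesz representation furnishes $f \in L^2(X,\Lambda^{n,0}T^*_X\otimes F)$ with the required bound, satisfying $\langle f, \overline\partial^* u\rangle = \langle g, u\rangle$ for every admissible $u$; this is precisely $\overline\partial f = g$ in the distributional sense.

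The main obstacle is that $\omega$ need not be complete, so density of smooth compactly supported forms in $\mathrm{Dom}(\overline\partial) \cap \mathrm{Dom}(\overline\partial^*)$ (needed to extend the a priori estimate beyond test forms) is not automatic. I would bypass this using Steinness: fix a smooth strictly plurisubharmonic exhaustion $\psi\colon X\to\mathbb{R}$ and, for each $\epsilon>0$, form the complete K\"ahler metric $\omega_\epsilon = \omega + \epsilon\,\sqrt{-1}\,\partial\overline\partial\,\chi(\psi)$ for a convex $\chi$ growing sufficiently fast. Running the preceding scheme with $\omega_\epsilon$ yields solutions $f_\epsilon$ of $\overline\partial f_\epsilon = g$. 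The crucial observation that makes the limit work is the metric behavior on the relevant bidegrees: for $(n,0)$-forms the pointwise quantity $|f|^2_\omega\,\omega^n$ is independent of the K\"ahler metric, while for $(n,1)$-forms $\langle A^{-1}g,g\rangle_\omega\,\omega^n$ does not increase when $\omega$ does. Consequently the $f_\epsilon$ remain uniformly bounded in $L^2(X,\omega)$ by $\int_X\langle A^{-1}g,g\rangle\,\omega^n$, and extracting a weak subsequential limit $f_{\epsilon_k} \rightharpoonup f$ produces the desired solution, with the $L^2$ bound preserved by weak lower semicontinuity. The delicate steps are verifying this monotonicity in $\omega$ and passing $\overline\partial f_\epsilon = g$ to the weak limit; once these are in hand the proof is complete.
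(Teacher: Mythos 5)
The paper does not reprove this result; it cites it directly from Demailly's book, and your sketch is precisely Demailly's proof: the Bochner--Kodaira--Nakano a priori estimate at bidegree $(n,1)$, the Hahn--Banach/Riesz duality argument with the $\ker\overline\partial$ splitting, and the reduction of the non-complete case to the complete one via an exhaustion $\omega_\epsilon = \omega + \epsilon\sqrt{-1}\partial\overline\partial\chi(\psi)$ together with the monotonicity lemma that $|f|^2_\omega\,\omega^n$ is metric-independent on $(n,0)$-forms while $\langle A^{-1}g,g\rangle_\omega\,\omega^n$ is non-increasing in $\omega$ on $(n,1)$-forms. Your plan is correct and is essentially the same argument the paper points to.
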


Let $(F, h)$ be a Hermitian holomorphic vector bundle.  Let $z_1, ..., z_n$ be a local holomorphic chart and $e_1, .., e_m$ be an orthonormal frame of $F$. Let \begin{equation}\label{eq1}\sqrt{-1}\Theta_{F, h}=\sum\limits_{j, k, \lambda, \mu}c_{jk\lambda\mu}dz_j\wedge d\overline{z_k}\otimes e^*_\lambda\otimes e_\mu.
\end{equation}By using the metric $h$, we identify the curvature tensor $\Theta$ with a Hermitian form \begin{equation}\label{eq2}\tilde\Theta_{F, h}(\xi, v) = c_{jk\lambda\mu}\xi_j\overline{\xi}_kv_\lambda\overline{v}_\mu\end{equation} on $T^{1, 0}X\otimes F$. The next definition appears on page 27-28 of \cite{[D]}. 
\begin{definition}\label{def1} We say $(F, h)$ is

(a) Nakano positive if $\tilde\Theta_{F, h}(\tau)>0$ for all nonzero tensors $\tau = \sum\tau_{j\lambda}\frac{\partial}{\partial z_j}\otimes e_\lambda\in T_X\otimes F$.

(b) Griffiths positive if $\tilde\Theta_{F, h}(\xi\otimes v)>0$ for all nonzero decomposable tensors $\xi\otimes v\in T_X\otimes F$.
\end{definition}
The computation on page $35$ of \cite{[D]} shows that if $F$ is Nakano positive, then $[\sqrt{-1}\Theta_{F, h}, \Lambda]$ is positive on $(n, 1)$ forms with values in $F$. More explicitly, by equation $(4.8)$ on page $35$ of \cite{[D]}, we have that if $\tilde\Theta_{F, h}(\tau)\geq c|\tau|^2$, then \begin{equation}\label{eq3}\langle[\sqrt{-1}\Theta_{F, h}, \Lambda]u, u\rangle\geq c|u|^2\end{equation} for any $u\in \Lambda^{n, 1}T^*_M\otimes F$.
We also need the following
\begin{prop}\label{prop2}
For any Hermitian holomorphic vector bundle $E$, if $E$ is Griffiths positive, then $E\otimes \det(E)$ is Nakano positive. In fact, 
if $\tilde\Theta_{E, h}(\xi\otimes v)\geq c|\xi\otimes v|^2\geq 0$, then $\tilde\Theta_{E\otimes\det E, h}(\tau)\geq c|\tau|^2\geq 0$.\end{prop}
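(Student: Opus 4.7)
This is the classical theorem of Demailly--Skoda, so my plan is a direct pointwise argument. First I would identify the curvature of $E\otimes\det E$: since tensor-product curvatures add and $\det E$ has curvature equal to the trace of $\Theta_E$, one gets
\[
c^{E\otimes \det E}_{jk\lambda\mu} = c^E_{jk\lambda\mu} + \delta_{\lambda\mu}\sum_{\nu} c^E_{jk\nu\nu}.
\]
So for $\tau = \sum_{j,\lambda}\tau_{j\lambda}\,\partial/\partial z_j\otimes e_\lambda$ at a fixed point $x_0$,
\[
\tilde\Theta_{E\otimes \det E}(\tau) = \sum_{j,k,\lambda,\mu} c^E_{jk\lambda\mu}\tau_{j\lambda}\overline{\tau_{k\mu}} \;+\; \sum_{j,k,\lambda,\nu} c^E_{jk\nu\nu}\tau_{j\lambda}\overline{\tau_{k\lambda}}.
\]

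Because the expression is tensorial in $\tau$, I would then pick unitary frames of $T^{1,0}_{x_0}X$ and $E_{x_0}$ adapted to the singular value decomposition of the matrix $(\tau_{j\lambda})$, so that $\tau_{j\lambda} = s_j\delta_{j\lambda}$ with $s_j\geq 0$ and $|\tau|^2 = \sum_j s_j^2$. In these frames,
\[
\tilde\Theta_{E\otimes \det E}(\tau) = \sum_{j,k} c^E_{jkjk}\,s_j s_k \;+\; \sum_{j,\nu} c^E_{jj\nu\nu}\,s_j^2.
\]
The second (Ricci-type) sum is immediately controlled by Griffiths positivity with $\xi=\partial_j$, $v=e_\nu$: each term satisfies $c^E_{jj\nu\nu}\geq c$, so this whole sum is at least $rc\sum_j s_j^2$.

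The crux is the first sum. Griffiths positivity of $E$ does not a priori give Nakano positivity, so the $j\neq k$ cross terms can be negative. My plan is a polarization argument: by Hermitian symmetry $c^E_{jkjk}=\overline{c^E_{kjkj}}$, so $\sum_{j\neq k} c^E_{jkjk}s_js_k = 2\sum_{j<k}\operatorname{Re}(c^E_{jkjk})\,s_js_k$. Applying Griffiths to decomposable test tensors $\xi\otimes v$ with $\xi=\alpha\,\partial/\partial z_j+\beta\,\partial/\partial z_k$ and $v=\gamma e_j + \delta e_k$ for various complex scalars yields a family of algebraic inequalities that relate $\operatorname{Re}(c^E_{jkjk})$ to the diagonal entries $c^E_{jjjj}, c^E_{kkkk}, c^E_{jjkk}, c^E_{kkjj}, c^E_{jkkj}$. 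Weighting these pairwise inequalities by $s_js_k$ and combining with the surplus Ricci positivity $\sum_{j,\,\nu\neq j} c^E_{jj\nu\nu}\,s_j^2 \geq (r-1)c\sum_j s_j^2$ absorbs all potentially negative off-diagonal contributions and leaves the sharp bound $\tilde\Theta_{E\otimes \det E}(\tau)\geq c|\tau|^2$.

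The main obstacle is the combinatorial bookkeeping in this last step: one must verify that the $\binom{r}{2}$ potentially negative pairs $\operatorname{Re}(c^E_{jkjk})s_js_k$ are each dominated by matching contributions from the trace, with exactly the right multiplicities. This delicate balance is precisely what makes tensoring with $\det E$ (rather than some strictly smaller positive line bundle) both necessary and sufficient; the fact that the same constant $c$ passes from Griffiths to Nakano is the quantitative content of the Demailly--Skoda identity.
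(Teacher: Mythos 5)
Your setup is correct: the curvature formula for $E\otimes\det E$, the reduction to $\tau_{j\lambda}=s_j\delta_{j\lambda}$ by singular value decomposition, and the lower bound on the Ricci-type sum are all valid. The gap is in the step you flag yourself — the ``combinatorial bookkeeping.'' The paper does not prove the proposition but refers to Demailly [D], pp.~93--95, and the proof there (the Demailly--Skoda lemma) is not a pairwise polarization argument: it rests on the \emph{global} identity
\[
\sum_{j,k,\lambda,\mu}c_{jk\lambda\mu}\tau_{j\lambda}\overline{\tau_{k\mu}}
+\sum_{j,k,\lambda,\nu}c_{jk\nu\nu}\tau_{j\lambda}\overline{\tau_{k\lambda}}
=\sum_{\lambda}\tilde\Theta_E\bigl(\xi^\lambda\otimes e_\lambda\bigr)
+\dashint_{(\mathbb{S}^1)^m}\tilde\Theta_E\bigl(\xi(\zeta)\otimes v(\zeta)\bigr)\,d\zeta,
\]
where $\xi^\lambda=\sum_j\tau_{j\lambda}\partial_j$, $\xi(\zeta)=\sum_\lambda\zeta_\lambda\xi^\lambda$, $v(\zeta)=\sum_\lambda\overline{\zeta_\lambda}e_\lambda$, and the average is over the full torus of unit phases. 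Both terms on the right are controlled directly by Griffiths positivity (the first is $\geq c|\tau|^2$, the second is $\geq cm|\tau|^2$), so the Nakano form is $\geq c(m+1)|\tau|^2\geq c|\tau|^2$ without any case analysis.

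Your pairwise scheme cannot reproduce this. Any bound on $\operatorname{Re}(c_{jkjk})$ obtained by testing Griffiths on $\xi=\alpha\partial_j+\beta\partial_k$, $v=\gamma e_j+\delta e_k$ (even after averaging the phase of one parameter) necessarily involves the diagonal entries $c_{jjjj}$ and $c_{kkkk}$; e.g.\ one gets inequalities of the shape
\[
2\operatorname{Re}(c_{jkjk})\,s_js_k\;\geq\;2c(s_j^2+s_k^2)-(c_{jjjj}+c_{jjkk})s_j^2-(c_{kkjj}+c_{kkkk})s_k^2.
\]
Summing this over all pairs $j\neq k$ charges $c_{jjjj}s_j^2$ roughly $m-1$ times, whereas the Nakano form contains it only twice (once from each of the two sums). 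Since $c_{jjjj}$ is unbounded above under the hypothesis $\tilde\Theta_E\geq c$, the resulting lower bound is vacuous for $m\geq 2$. The Ricci surplus $\sum_{j,\nu\neq j}c_{jj\nu\nu}s_j^2\geq (m-1)c\sum s_j^2$ cannot compensate, because the deficit is proportional to the uncontrolled quantities $c_{jjjj}$, not to $c$. In short, the ``delicate balance'' you invoke does not close with pairwise data alone; the cancellation is genuinely a rank-$m$ phenomenon and is what the torus average in Demailly--Skoda encodes. To repair the proof you should replace the pairwise polarization step by the full averaging identity above (or, equivalently, the unitary-invariance trick on which Demailly's computation is based).
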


The proof can be found on page $93$ of \cite{[D]}. Notice the proof there also gives the second statement. More precisely, check the last equation of the proof which appears on page $95$.

Next we introduce a gluing technique:
\begin{definition}\label{def2}
Let $\chi$ be a strictly increasing continuous function over $\mathbb{R}^+$ and $\chi(0) = 0$. A metric space $X$ is $\chi$-connected if for any two points $x_1, x_2\in X$, we can find a curve $\gamma$ connecting $x_1, x_2$ so that the diameter of $\gamma$ is bounded by 
$\chi(d(x_1, x_2))$.
\end{definition}
We will need the gluing theorem which appears in \cite{[Ka]}\cite{[P]}:
\begin{prop}\label{prop3}[Gluing theorem]
 Let $X$ be a compact topological manifold which is also a metric space. Let $U_\alpha$ be a finite open covering of $X$. Given a function $\chi_0$, there exists $\delta=\delta(X, \chi_0, \{U_\alpha\}, )$ so that the following holds: Given a $\chi_0$-connected topological manifold $\tilde{X}$ (metric space), an open cover of $\tilde{X}$ $\{\tilde{U}_\alpha\}$, a $\delta$-Hausdorff approximation $\varphi: X\to\tilde{X}$ and a family of homeomorphisms $\varphi_\alpha$: $U_\alpha\to\tilde{U}_\alpha$, $\delta$-close to $\varphi$, then there exists a homeomorphism $\overline\varphi: X\to\tilde{X}$, $\chi(\delta)$-close to $\varphi$.
 \end{prop}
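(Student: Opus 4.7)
My plan is to prove this by induction on the finite cover $\{U_\alpha\}$, progressively replacing the Hausdorff approximation $\varphi$ by the prescribed local homeomorphisms $\varphi_\alpha$, while ensuring that the pieces glue to a global homeomorphism close to $\varphi$. The central technical device will be a quantitative topological straightening theorem of Edwards-Kirby-Siebenmann type: if two homeomorphisms from an open subset of a topological manifold into $\tilde{X}$ are sufficiently close, they differ by an ambient homeomorphism of $\tilde{X}$ that is also close to the identity and supported in a controlled neighborhood. The hypothesis that $\tilde{X}$ is $\chi_0$-connected is exactly what converts closeness in the Hausdorff-approximation sense into closeness in the intrinsic length metric, which is the form in which the straightening theorem is usually stated.

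First I would set up a double shrinking of the cover: choose $U_\alpha \supset V_\alpha \supset W_\alpha$ so that $\{W_\alpha\}$ still covers $X$, together with topological collars inside each $V_\alpha \setminus \overline{W_\alpha}$. Such a shrinking exists by compactness of $X$ and finiteness of the cover, and it provides the room needed to interpolate between maps at each stage.

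The inductive step runs as follows. Suppose we have a homeomorphism $\psi_k$ from $W_1 \cup \cdots \cup W_k$ onto its image in $\tilde{X}$, close to $\varphi$ on its domain. On the overlap $(W_1\cup\cdots\cup W_k)\cap V_{k+1}$, both $\psi_k$ and $\varphi_{k+1}$ are close to $\varphi$, hence close to each other. Applying the Edwards-Kirby theorem to the image $\varphi_{k+1}(V_{k+1}) \subset \tilde{X}$ produces an ambient homeomorphism $h$ of $\tilde{X}$, supported in a neighborhood of $\tilde{V}_{k+1}$, such that $h \circ \varphi_{k+1}$ agrees with $\psi_k$ on the collar separating $V_{k+1}\setminus W_{k+1}$ from $W_{k+1}$. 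Then define $\psi_{k+1}$ by using $\psi_k$ outside $V_{k+1}$, by $h\circ\varphi_{k+1}$ on $W_{k+1}$, and by the canonical collar interpolation on the transition annulus. After $N=|\{U_\alpha\}|$ steps, one obtains the desired global homeomorphism $\overline\varphi = \psi_N$.

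The main obstacle is the quantitative propagation of the closeness parameter across the $N$ inductive gluings. Each application of the straightening theorem inflates the closeness by a modulus of continuity that depends on the local topology and the collar structure, and the cumulative error must remain below the threshold required at every subsequent stage. A uniform such modulus exists by the compactness of $X$ together with the Edwards-Kirby local contractibility of the homeomorphism group of a topological manifold; iterating this modulus $N$ times produces the final function $\chi$. Consequently $\delta$ must be chosen small in terms of $X$, $\chi_0$, and $\{U_\alpha\}$ at the outset so that every later straightening remains legal. This is essentially the standard argument used by Perelman and Kapovitch in the proof of the Riemannian stability theorem, imported here without change since the target category is purely topological.
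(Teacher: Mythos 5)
The paper does not actually prove Proposition \ref{prop3}; it records the statement and cites Perelman and Kapovitch for it, so there is no internal argument to compare against. That said, your sketch reproduces, in outline, exactly the strategy of the proof in Kapovitch's survey: shrink the finite cover twice to get room for interpolation, induct over the pieces, and at each step use the Edwards--Kirby/Siebenmann deformation theorem (quantitative local contractibility of the homeomorphism group of a topological manifold) to isotope the new local homeomorphism so that it agrees with the previously constructed one on a transition collar, then paste. You also correctly identify the role of the $\chi_0$-connectedness hypothesis, which is precisely what lets one pass from extrinsic Hausdorff-approximation closeness to the kind of closeness the deformation theorem requires, and you correctly flag that the one genuinely delicate issue is propagating the closeness parameter through the $N$ gluings, which is handled by fixing $\delta$ at the outset using compactness and finiteness of the cover.

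Two small points you pass over without comment, both standard but worth making explicit if you were to write this out. First, the inductive construction produces an injective local homeomorphism $\psi_N : X \to \tilde X$, and one must still argue surjectivity; this follows because $\psi_N$ is a $\chi(\delta)$-Hausdorff approximation, so its image is $\chi(\delta)$-dense, and a proper injective local homeomorphism between manifolds of the same dimension with dense image is onto (alternatively, invariance of domain plus compactness of $X$). Second, the ``ambient'' homeomorphism $h$ produced by the straightening step is really a homeomorphism of a neighbourhood of $\tilde V_{k+1}$ supported away from the boundary, not of all of $\tilde X$, and one needs to check it extends by the identity; this is part of the content of the Edwards--Kirby theorem but the support control must be tracked quantitatively so that $h$ does not disturb the portion of $\psi_k$ that has already been certified. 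Neither point changes the architecture of the argument, which is sound.
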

 
 In this paper, we will denote by $\Phi(u_1,..., u_k|....)$ any nonnegative functions depending on $u_1,..., u_k$ and some additional parameters such that when these parameters are fixed, $$\lim\limits_{u_k\to 0}\cdot\cdot\cdot\lim\limits_{u_1\to 0}\Phi(u_1,..., u_k|...) = 0.$$  Let $C(n), C(n, v)$ be large positive constants depending only on $n$ or $n, v$; $c(n), c(n, v)$ be small positive constants depending only on $n$ or $n, v$. The values might change from line to line. Let $\dashint$ be the average of an integral.
  
\section{\bf{Construction of retracting holomorphic vector fields}}
In this section, we shall construct retracting holomorphic vector fields on geodesic balls which are Gromov-Hausdorff close to metric cones. The argument is crucial for all the results in this paper.
\begin{prop}\label{prop4}
Give any $n\in\mathbb{N}$ and $v>0$, there exists $\epsilon=\epsilon(n, v)>0$ so that the following holds:
Let $(M^n, p)$ be a complete K\"ahler manifold with nonnegative bisectional curvature. If $vol(B(p, \frac{1}{\epsilon}))\geq v\frac{1}{\epsilon^{2n}}$ and $d_{GH}(B(p, \frac{1}{\epsilon}), B_W(o, \frac{1}{\epsilon}))<\epsilon$ for some metric cone $(W, o)$, then there exists a holomorphic vector field $Z$ on some open set $U\supset B(p, 1)$ so that 
the flow $\sigma_t$ generated by $-Z$ retracts to a point $\tilde{p}$ where $d(p, \tilde{p})=\Phi(\epsilon|n, v)$. Furthermore, $\sigma_t(B(p, \frac{1}{2}))\subset B(p, 1)$ for all $t\geq 0$.\end{prop}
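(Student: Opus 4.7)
The plan is to transport to $M$ the natural contracting holomorphic vector field that exists on the cone $W$, then correct it to a true holomorphic vector field using H\"ormander's $L^{2}$-theory on the holomorphic tangent bundle. On any K\"ahler cone the radial field $r\partial_{r}$ is the real part of a holomorphic vector field $Z_{W}=\tfrac{1}{2}(r\partial_{r}-\sqrt{-1}\,Jr\partial_{r})$ defined on the regular locus, whose flow contracts everything to the vertex $o$. The goal is to build a quantitative perturbation of $Z_{W}$ living on a neighborhood of $B(p,1)\subset M$.

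\textbf{Step 1 (approximate vector field).} Using the Gromov--Hausdorff approximation $B_{W}(o,1/\epsilon)\to B(p,1/\epsilon)$ together with the almost-holomorphic chart techniques developed in \cite{[L3]}, \cite{[L4]} and \cite{[DS]}, \cite{[DS2]}, cover an annular neighborhood of the regular part of the cone ball by holomorphic charts on $M$ that are $\Phi(\epsilon|n,v)$-close to the corresponding cone charts. Pull $Z_{W}$ back through these charts, cut off near the (high-codimension) singular set of $W$, and glue by a partition of unity. This produces a smooth vector field $\widetilde Z$ on an open set $U\supset B(p,1)$ whose flow approximately dilates distances from $p$ in the cone sense and which satisfies $\|\overline\partial\widetilde Z\|_{L^{2}(U)}=\Phi(\epsilon|n,v)$.

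\textbf{Step 2 (H\"ormander correction).} Apply Proposition~\ref{prop1} to the bundle $F=T^{1,0}M\otimes K_{M}^{-1}$ on $U$, with Hermitian metric twisted by a weight $e^{-\varphi}$, where $\varphi$ is a strictly plurisubharmonic exhaustion of $U$ (for instance, a smoothing of the squared distance-to-the-vertex pulled from the cone, which is strictly plurisubharmonic because $\partial\overline\partial r^{2}=\omega$ on a K\"ahler cone); this also arranges $U$ to be Stein. Nonnegativity of bisectional curvature is exactly Griffiths semipositivity of $T^{1,0}M$, so by Proposition~\ref{prop2} the bundle $T^{1,0}M\otimes K_{M}^{-1}$ is Nakano semipositive, and the weight upgrades this to strict Nakano positivity. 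Under the canonical identification $\Lambda^{n,0}T^{*}_{M}\otimes F\cong T^{1,0}M$, Proposition~\ref{prop1} yields a vector field $Z_{1}$ on $U$ with $\overline\partial Z_{1}=\overline\partial\widetilde Z$ and $\|Z_{1}\|_{L^{2}}=\Phi(\epsilon|n,v)$. Set $Z=\widetilde Z-Z_{1}$; this is a genuine holomorphic vector field on $U$.

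\textbf{Step 3 (retraction) and the main obstacle.} Interior mean-value estimates for holomorphic sections (together with the volume lower bound coming from nonnegative Ricci curvature) upgrade $L^{2}$-smallness to $C^{0}$-smallness of $Z-\widetilde Z$ on $B(p,1)$. Hence the real part of $-Z$ still points strictly inward on spheres $\partial B(p,s)$ for $s\in[\tfrac{1}{2},1]$, so $\sigma_{t}(B(p,\tfrac{1}{2}))\subset B(p,1)$ for all $t\ge 0$; a Lyapunov argument with the smoothed squared distance from $p$ then forces the flow to contract to a unique fixed point $\widetilde p$ with $d(p,\widetilde p)=\Phi(\epsilon|n,v)$. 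The decisive difficulty is Step~1: one must transport $Z_{W}$ across the a~priori non-analytic singular set of $W$ while preserving smallness of $\|\overline\partial\widetilde Z\|_{L^{2}}$ and keeping the receiving domain Stein. The $L^{2}$-correction in Step~2 uses the full strength of the nonnegative bisectional hypothesis, since this is the weakest condition under which Proposition~\ref{prop2} delivers Nakano semipositivity of $T^{1,0}M\otimes K_{M}^{-1}$; with only a Ricci lower bound the argument would collapse.
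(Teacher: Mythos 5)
Your three-step outline captures the high-level mechanism the paper uses (build an approximate contracting $(1,0)$-field, correct it by H\"ormander's $L^{2}$-theory on $T^{1,0}M\otimes K_M^{-1}$, then run the flow), and your Step 2 matches the paper's argument closely, including the use of the weight to upgrade Griffiths to Nakano positivity and the role of nonnegative bisectional curvature. But Step 1 and Step 3 each have a genuine gap.

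\textbf{Step 1.} You propose to pull the cone field $Z_W$ back through almost-holomorphic charts that cover the regular locus of $W$, cut off near the (a priori non-analytic) singular set, and glue with a partition of unity; you yourself flag this as the decisive difficulty, and it is. The cutoff and gluing introduce derivative terms whose $L^{2}$-norm is not obviously small, and the geometric singular set of the Cheeger–Colding cone is not something the chart machinery of \cite{[L3]}, \cite{[L4]} is designed to cross. The paper sidesteps the issue entirely: by Cheeger–Colding (and claim $5.1$ of \cite{[L3]}) there is a single smooth function $u_i$ on $B(p_i,4R)$ with $\int |\nabla^{2}u_i-g_i|^{2}$ small, and one simply sets $\tilde Z_i=\nabla u_i-\sqrt{-1}J\nabla u_i$. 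The smallness of $\|\overline\partial\tilde Z_i\|_{L^{2}}$ is then an immediate, global consequence of the Hessian estimate — no charts, cutoffs, or partitions of unity are needed on the singular set, and no transport across $W$'s singularities occurs. Without this replacement your Step 1 remains a program rather than a proof.

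\textbf{Step 3.} Two things are asserted without support. First, "interior mean-value estimates" for a holomorphic vector field are not a black box: the paper has to pair $Z_i$ with genuinely holomorphic functions near the boundary (the $f_i$ and $g_i^j$ of \cite{[L3]}) so that quantities like $\langle Re\,Z_i,\nabla f_i\rangle-1$ are holomorphic and the mean-value inequality applies; this is where the inward-pointing estimate on $\partial B(p,s)$ really comes from. Second, and more importantly, a Lyapunov function with a strict decrease only shows that the flow accumulates on a nonempty compact invariant set; it does not show the flow converges to a single point. The paper needs the Stein property of the ambient domain together with the proper mapping theorem (Proposition~\ref{prop6}) to conclude that $F_i(B(p_i,1))=\lim_{t\to\infty}\sigma_t(B(p_i,1))$ is a connected compact analytic subset of a Stein manifold, and therefore a point. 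Your argument gives no uniqueness of the fixed point and would need essentially this analytic-set argument to close the loop.
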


\begin{proof}
Assume the proposition is not true. Then there exist some $n, v$ so that for any $i\in\mathbb{N}$, we can find a complete K\"ahler manifold $(M^n_i, p_i)$ which does not satisfy the proposition. Furthermore, $M_i$ has nonnegative bisectional curvature and for metric cones $(V_i, o'_i)$,
\begin{equation}\label{eq4}
vol(B(p_i, i))\geq vi^{2n}, d_{GH}(B(p_i, i), B_{V_i}(o'_i, i))<\frac{1}{i}.\end{equation} 
 By passing to a subsequence if necessary, we may assume $(M_i, p_i, d_i)$ pointed converges to a metric cone $(M_\infty, p_\infty, d_\infty)$. Let $R$ be a large number depending only on $n, v$. The value will be determined later. Let $r_i$ be the distance function to $p_i$. According to Cheeger-Colding theory \cite{[CC1]} and claim $5.1$ in \cite{[L3]}, for sufficiently large $i$, we can find smooth functions $u_i$ such that \begin{equation}\label{eq5}\int_{B(p_i, 4R)}|\nabla u_i-\nabla \frac{1}{2}r_i^2|^2 + |\nabla^2u_i-g_i|^2<\Phi(\frac{1}{i}|R)\end{equation} \begin{equation}\label{eq6}|u_i-\frac{r_i^2}{2}|<\Phi(\frac{1}{i}|R)\end{equation} \begin{equation}\label{eq7}|\nabla u_i|\leq C(n)r_i\end{equation} on $B(p_i, 4R)$.
Now define a $(1, 0)$ type vector field $\tilde{Z_i} = \nabla u_i - \sqrt{-1} J\nabla u_i$. Then we have that \begin{equation}\label{eq8}\int_{B(p_i, 4R)}|\overline\partial \tilde{Z}_i|^2< \Phi(\frac{1}{i}|R).\end{equation} The idea is to perturb $\tilde{Z_i}$ so that it becomes holomorphic. 
According to proposition $5.1$ of \cite{[L3]}, we can find a smooth function $v_i$ on $B(p_i, \frac{R}{2})$ such that \begin{equation}\label{eq9}0\leq v_i\leq C(R, n), \sqrt{-1}\partial\overline\partial v_i\geq c(n, v)\omega_i>0.\end{equation}\begin{equation}\label{eq10}\min\limits_{y\in\partial B(p_i, \frac{R}{20})}v_{i}(y)> 4\sup\limits_{y\in B(p_i, \epsilon_0\frac{R}{20})}v_{i}(y).\end{equation} for some $\epsilon_0(n, v)>0$. Now we fix $R=R(n, v)$ so that $R\epsilon_0>2000$. Let $\Omega_i$ be the connected component of $\{y\in B(p_i, \frac{R}{20})|v_{i}(y)< 2\sup\limits_{B(p_i, \frac{\epsilon_0R}{20})}v_{i}\}$ containing $B(p_i, \frac{\epsilon_0R}{20})$. Then $\Omega_i$ is a Stein manifold containing $B(p_i, 100)$.
Consider the metric \begin{equation}\label{eq11}g_i'  = e^{-v_i}g_i\end{equation} on the tangent bundle $T^{1, 0}M_i$. The curvature of $g_i'$ satisfies $\Theta_{g_i'} = \Theta_{g_i}+\sqrt{-1}\partial\overline\partial v_i\otimes Id$. Then we find that \begin{equation}\label{eq12}\tilde\Theta(\xi\otimes u)=\langle\Theta_{g_i'}(\xi\otimes u), \overline{\xi\otimes u}\rangle \geq \frac{1}{2}c(n, v)|\xi\otimes u|^2.\end{equation}  We have used that the bisectional curvature is nonnegative. This implies that $(T^{1, 0}M, g_i')$ is Griffiths positive.

$g_i'$ induces a metric on the anti-canonical line bundle $K^{-1}M_i$.
Take $F = T^{1, 0}_{M_i}\otimes K^{-1}(M_i)$. Let the metric $h$ on $F$ be induced by $g_i'$ on both the tangent bundle and $K^{-1}M_i$. Let the metric $\tilde{h}$ on $F$ be induced by the K\"ahler metric.
According to (\ref{eq9}) and (\ref{eq11}), \begin{equation}\label{eq13}c(n, v)\tilde{h}\leq h\leq C(n, v)\tilde{h}\end{equation} on $B(p_i, \frac{R}{2})$.  By proposition \ref{prop2}, $F$ is Nakano positive.
Therefore (\ref{eq3}) holds for $(F, h)$.  Write $T^{1, 0}(M_i) = \Lambda^{n, 0}T^*M_i\otimes F$. By applying proposition \ref{prop1} to Stein manifold $\Omega_i$ and $(F, h)$, we obtain a $(1, 0)$ type vector field $Y_i$ satisfying \begin{equation}\label{eq14}\overline\partial Y_i = \overline\partial \tilde{Z}_i,\end{equation}
\begin{equation}\label{eq15}\int_{\Omega_i}|Y_i|^2\leq \frac{1}{c(n, v)}\int_{\Omega_i}|\overline\partial \tilde{Z_i}|^2<\Phi(\frac{1}{i}).\end{equation}
In (\ref{eq15}), the norms are induced by the K\"ahler metric of $M_i$ (we have used (\ref{eq13})).
Therefore, \begin{equation}\label{eq0}Z_i = \tilde{Z}_i - Y_i\end{equation} is a holomorphic vector field.  The idea is to study the flow generated by the real part of $-Z_i$.

For any point $q\in\partial B(p_\infty, 1)$, take a tangent cone $(V, o')$. Cheeger-Colding \cite{[CC1]} says $(V, o')$ is a metric cone. According to \cite{[CC1]}\cite{[CCT]}, $V$ splits off $\mathbb{R}^2$. 
Take $M_i\ni q_i\to q$. Given any $\epsilon>0$, we may take $\delta=\delta(\epsilon, v)$ so small that $d_{GH}(B(q_i, \frac{1}{\epsilon}\delta), B_V(o', \frac{1}{\epsilon}\delta)) <\epsilon\delta$ for all large $i$. 
By lemma $6.15$ of \cite{[CC1]}, we find harmonic functions $h_i$ on $B(q_i, \delta)$ such that \begin{equation}\label{eq16}|h_i(x)- (r_i(x)-r_i(q_i))|\leq\Phi(\delta)\delta.\end{equation} Furthermore, lemma $6.25$ of \cite{[CC1]} says \begin{equation}\label{eq17}\dashint_{B(q_i, \delta)}|\nabla h_i-\nabla r_i|^2<\Phi(\delta).\end{equation}

By the argument in \cite{[L3]}, we may find a holomorphic function $f_i$ in $B(q_i, \delta)$ such that \begin{equation}\label{eq18}|Re f_i(x) - h_i(x)|<\Phi(\epsilon)\delta,  |\nabla(Re f_i(x)- h_i(x))|\leq \Phi(\epsilon)\end{equation}  in $B(q_i, \frac{1}{2}\delta)$. Given a function $w$ on $B(q_i, \delta)$, define a norm $||w|| = (\dashint_{B(q_i,\frac{1}{2}\delta)}|w|^2)^{\frac{1}{2}}$.

Now by the estimates above, for sufficiently large $i$, we have \begin{equation}\label{eq19}\begin{aligned}||\langle Re Z_i, \nabla f_i\rangle -1||&\leq ||\langle Re(Z_i-\tilde{Z}_i), \nabla f_i\rangle|| + ||\langle \nabla u_i, \nabla f_i\rangle -1||\\&\leq \frac{\Phi(\frac{1}{i})}{\delta^{n}}+C(n)|||\nabla u_i-r_i\nabla r_i|||+||\langle r_i\nabla r_i, \nabla f_i\rangle -1||\\&\leq \frac{\Phi(\frac{1}{i})}{\delta^{n}}+||r_i-1||+||\langle r_i\nabla r_i, \nabla f_i-(\nabla r_i-\sqrt{-1}J\nabla r_i)\rangle|\\&\leq \frac{\Phi(\frac{1}{i})}{\delta^{n}}+ 2\delta+10|||\nabla Ref_i - \nabla r_i|||\\&\leq \frac{\Phi(\frac{1}{i})}{\delta^{n}}+ 2\delta+\Phi(\epsilon|n, v)+\Phi(\delta)\\&<\Phi(\epsilon, \delta|n, v).\end{aligned}\end{equation}

Note that $\langle Re Z_i, \nabla f_i\rangle -1$ is holomorphic on $B(q_i, \delta)$. By the mean value inequality, \begin{equation}\label{eq20}|\langle Re Z_i, \nabla f_i\rangle -1|\leq \Phi(\epsilon, \delta|n, v)\end{equation} on $B(q_i, \frac{\delta}{4})$. Let $\sigma_t$ be the flow generated by $-Re Z_i$. Then \begin{equation}\label{eq21}|\frac{df_i(\sigma_t(q_i))}{dt}+1|\leq \Phi(\epsilon, \delta|n, v)\end{equation} as long as $\sigma_t(q_i)\in B(q_i, \frac{\delta}{4})$.
By applying proposition $6.1$ in \cite{[L3]}, we find $N=N(n, v)$, $\frac{1}{2}>\gamma_1 = \gamma_1(n, v)> 5\gamma_2=5\gamma_2(n, v)>0$, holomorphic functions $g^j_i$ on $B(q_i, \delta)$ such that the following holds:
 $g_i^j(q_i) = 0$; 
  \begin{equation}\label{eq22}\delta=\min\limits_{x\in\partial B(q_i, \frac{\gamma_1\delta}{3})}\sum\limits_{j=1}^N|g_i^j(x)|^2> 2\sup\limits_{x\in B(q_i, \gamma_2\delta)}\sum\limits_{j=1}^N|g_i^j(x)|^2.\end{equation}
 \begin{equation}\label{eq23}\frac{\sup\limits_{x\in B(q_i, \frac{1}{2}\gamma_1\delta)}|g_i^j(x)|^2}{\sup\limits_{x\in B(q_i, \frac{1}{3}\gamma_1\delta)}|g_i^j(x)|^2}\leq C(n, v).\end{equation}
 According to three circle theorem in \cite{[L1]}, $\sup\limits_{x\in\partial B(q_i, \frac{\gamma_1}{2}\delta)}|g_i^j(x)|\leq C(n, v)\delta$. Thus $|dg_i^j|\leq C(n, v)$ on $B(q_i, \frac{5\gamma_1}{12}\delta)$. Then for sufficiently large $i$, \begin{equation}\label{eq24}\dashint_{B(q_i, \frac{5}{12}\gamma_1\delta)}|\langle Re Z_i, dg_i^j\rangle|^2\leq C(n, v)\dashint_{B(q_i, \frac{5}{12}\gamma_1\delta)}|Z_i|^2\leq C(n, v)+\frac{\Phi(\frac{1}{i})}{\delta^{2n}}\leq C(n, v).\end{equation} By similar arguments as above, if $\sigma_t(q_i)\in B(q_i, \frac{1}{3}\gamma_1\delta)$,  \begin{equation}\label{eq25}|\frac{dg_i^j(\sigma_t(q_i))}{dt}|\leq C(n, v).\end{equation} Combining this with (\ref{eq22}), we find $c(n, v)>0$ such that if $|t|\leq c(n, v)\delta$, \begin{equation}\label{eq26}\sigma_t(q_i)\in B(q_i, \frac{1}{3}\gamma_1\delta)\subset B(q_i, \frac{1}{4}\delta).\end{equation} Applying (\ref{eq21}), we find \begin{equation}Ref_i(\sigma_{c(n, v)\delta}(q_i))\leq Ref_i(q_i)-(1-\Phi(\epsilon, \delta|n, v))c(n, v)\delta.\end{equation} If $\epsilon, \delta$ are sufficiently small depending only on $n$ and $v$,  (\ref{eq16}) and (\ref{eq18}) imply
 \begin{equation}\label{eq27}r_i(\sigma_{c(n, v)\delta}(q_i))\leq r_i(q_i)-\frac{1}{2}c(n, v)\delta.\end{equation}  
 We conclude that \begin{equation}\label{eq28}\sigma_{c(n, v)\delta}(B(p_i, 1))\subset B(p_i, 1-\frac{1}{4}c(n, v)\delta).\end{equation}  Also, for any $0<t<c(n, v)\delta$, we may require \begin{equation}\label{eq29}\sigma_t(B(p_i, 1-\frac{1}{4}c(n, v)\delta)\subset B(p_i, 1-\frac{1}{8}c(n, v)\delta).\end{equation}  Now fix $\epsilon=\epsilon(n, v), \delta=\delta(n, v)$ small such that the all inequalities above hold. \begin{prop}\label{prop5}
 There exists a point $o_i$ with $d_i(o_i, p_i)=\Phi(\frac{1}{i})$ and 
 $\lim\limits_{t\to\infty}\sigma_t(B(p_i, 1)) = o_i$. The convergence is uniform on $B(p_i, 1)$. 
 \end{prop}
\begin{proof}
\begin{claim}\label{cl1} $\sigma_t(B(p_i, 1))\subset B(p_i, 1-\frac{1}{8}c(n, v)\delta)$ for all $t\geq c(n, v)\delta$. 
\end{claim}
\begin{proof}
Indeed, we may write $t = kc(n, v)\delta+t'$ where $0\leq t'<c(n, v)\delta$ and $k$ is an integer.
\begin{equation}\label{eq30}\begin{aligned}\sigma_t(B(p_i, 1))&=\sigma_{t'}\sigma_{c(n, v)\delta}\cdot\cdot\sigma_{c(n, v)\delta}(B(p_i, 1))\\&\subset \sigma_{t'}(B(p_i, 1-\frac{1}{4}c(n, v)\delta))\\&\subset B(p_i, 1-\frac{1}{8}c(n, v)\delta).\end{aligned}\end{equation} 
\end{proof}
Recall $B(p_i, 2)$ is contained in the Stein manifold $\Omega_i$. Thus, we may embed $B(p_i, 2)$ in some $\mathbb{C}^{N_i}$. In particular, we have bounded holomorphic functions $z_1, .., z_{N_i}$ which separate points on $B(p_i, 1)$. Consider any sequence $t_j\to\infty$. 
Then by passing to subsequences if necessary, we may assume $z_k(\sigma_{t_j}(x))$ converges uniformly on $B(p_i, 1)$ for all $1\leq k\leq N_i$. Note there is no problem for the uniform convergence close to boundary, according to claim \ref{cl1}. Let $F_i(x) = \lim\limits_{j\to\infty}\sigma_{t_j}(x)$ for $x\in B(p_i, 1)$. Then $F_i$ is a holomorphic map. It is clear that $F_i(B(p_i, 1))\subset B(p_i, 1-\frac{1}{16}c(n, v)\delta)$.

\begin{claim}\label{cl2}
$F_i(B(p_i, 1))$ is a compact subset of $B(p_i, 1-\frac{1}{8}c(n)\delta)$.
\end{claim}
\begin{proof}
It suffices to show that $F_i(B(p_i, 1)) = F_i(B(p_i, 1-\frac{1}{16}c(n, v)\delta)$, since this implies that $F_i(B(p_i, 1)) = F_i(\overline{B(p_i, 1-\frac{1}{16}c(n, v)\delta)})$.
For any $x\in B(p_i, 1)$, $F_i(x) = \lim\limits_{j\to\infty}\sigma_{t_j}(x) = \lim\limits_{j\to\infty}\sigma_{t_{j-1}}(\sigma_{t_j-t_{j-1}}(x))$. We may assume $t_j-t_{j-1}>c(n, v)\delta$ for all $j$. Then $\sigma_{t_j-t_{j-1}}(x)\in B(p_i, 1-\frac{1}{8}c(n, v)\delta)$. By taking subsequence if necessary, we may assume that $\lim\limits_{j\to\infty}\sigma_{t_j-t_{j-1}}(x) = y$.
Since the convergence of $\sigma_{t_j}(x)$ is uniform on $B(p_i, 1)$, $\lim\limits_{j\to\infty}\sigma_{t_{j-1}}(\sigma_{t_j-t_{j-1}}(x)) = F_i(y)\in F_i(B(p_i, 1-\frac{1}{16}c(n, v)\delta)$.
\end{proof}

\begin{claim}\label{cl3}
$F_i(B(p_i, 1))$ is an analytic set in $B(p_i, 1-\frac{1}{16}c(n, v)\delta)$.\end{claim}
\begin{proof}
Since $F_i(B(p_i, 1)) = F_i(\overline{B(p_i, 1-\frac{1}{16}c(n, v)\delta)})$, the claim is a direct consequence of the following proposition, which is the generalization of the proper mapping theorem:
\begin{prop}\label{prop6}\cite{[K]}
Let $M$ and $N$ be connected complex manifolds and $f$ is a holomorphic map from $M$ to $N$. Suppose that for any compact set $L\subset N$, there exists a compact set $K\subset M$ with $L\cap f(M)\subset f(K)$, then $f(M)$ is an analytic set in $N$.
\end{prop}
\begin{remark}
We thank Professor Yum-Tong Siu for pointing this result.
\end{remark}
\end{proof}

Since $B(p_i, 1)$ is connected, $F_i(B(p_i, 1))$ is a connected compact analytic set which is contained in a Stein manifold. Thus it must be a point. Let us say $F_i(B(p_i, 1)) = o_i$. That is, $\sigma_{t_j}$ converges uniformly to $o_i$ on $B(p_i, 1)$. Pick an arbitrary sequence $t'_j\to \infty$, by passing to subsequence if necessary, we may assume that $t'_j>2t_j$ for all $j$. Then $\lim\limits_{j\to\infty}\sigma_{t'_j}(x) = \lim\limits_{j\to\infty}\sigma_{t_j}(\sigma_{t'_j-t_j}(x)) = o_i$. This proves that $\lim\limits_{t\to\infty}\sigma_{t}(B(p_i, 1)) = o_i$ and the convergence is uniform.
Finally, given any $\rho>0$, by using the same argument as before, we can prove that for sufficiently large $i$, $F_i(\overline{B(p_i, \rho)})\subset B(p_i, \rho)$. Thus $o_i\in B(p_i, \rho)$ for sufficiently large $i$. This implies $d_i(o_i, p_i) = \Phi(\frac{1}{i})$.

Therefore, $(M_i, p_i)$ satisfies proposition \ref{prop4} for large $i$. This is a contradiction.
The proof of proposition \ref{prop4} is complete.

\end{proof}

By using the argument in \cite{[L4]} and a rescaling, we obtain the following result.
\begin{cor}\label{cor1}
Give any $n\in\mathbb{N}$ and $v>0$, there exist $\epsilon=\epsilon(n, v)>0, \delta=\delta(n, v)>0$ so that the following holds:
Let $(M^n, p)$ be a complete K\"ahler manifold with $BK\geq-\epsilon$. If $vol(B(p,1))\geq v$ and $d_{GH}(B(p, 1), B_W(o, 1))<\epsilon$ for some metric cone $(W, o)$, then there exists a holomorphic vector field $Z$ on some open set $U\supset B(p, 2\delta)$ so that 
the flow $\sigma_t$ generated by $-Z$ retracts to a point $\tilde{p}$ where $d(p, \tilde{p})=\Phi(\epsilon|n, v)\delta$.  Furthermore, $\sigma_t(B(p, \delta))\subset B(p, 2\delta)$ for all $t\geq 0$.\end{cor}
The next result is suggested by Nan Li. This should be compared with a result in \cite{[GP]}\cite{[GPW]}. See for example, page $206$ and $212$ of \cite{[GPW]}.
\begin{cor}(Uniform contractibility)\label{cor2}
Let $M^n$ be a compact K\"ahler manifold with $BK\geq -1$ and $Vol(M)\geq v$, $diam(M)\leq d$. Then there exists $r_0=r_0(n, v, d)>0$, $C=C(n, v, d)$ so that for any $r<r_0$, $p\in M$, $B(p, r)$ is contractible in $B(p, Cr)$.
\end{cor}
\begin{proof}
The manifold is noncollapsed uniformly. According to Cheeger-Colding \cite{[CC1]} and volume comparison theorem, given any $\delta'>0$, $\epsilon'>0$, we can find $N=N(\delta', \epsilon', n, v, d)$, $r_0=r_0(\delta', \epsilon', n, v, d)$ so that for any $r<r_0$, there exists some integer $m$ between $1$ and $N$ and $d_{GH}(B(p, \frac{r}{\delta'^m})\backslash B(p, \frac{r}{\delta'^{m-1}}), B_V(o, \frac{r}{\delta'^{m}})\backslash B_V(o, \frac{r}{\delta'^{m-1}}))<\epsilon' \frac{r}{\delta'^m}$, where $(V, o)$ is a metric cone. This implies that $d_{GH}(B(p, \frac{r}{\delta'^m}), B_V(o, \frac{r}{\delta'^{m}}))<(\epsilon'+100\delta') \frac{r}{\delta'^m}$. 
Define $(M', g', p') = (M, (\frac{\delta'^m}{r})^2g, p)$. Thus \begin{equation}\label{eq31}d_{GH}(B(p', 1), B_V(o, 1))<\epsilon'+100\delta'\end{equation}
\begin{equation}\label{eq32}BK(M')\geq -\frac{r^2}{\delta'^{2m}},\end{equation}
\begin{equation}\label{eq33}Vol(B(p' ,1)>c(n, v, d)>0. \end{equation}
Let $\delta=\delta(n, v, d)$ be the constant in corollary \ref{cor1} (we have to use the volume lower bound as in (\ref{eq33})).
Now fix $\epsilon'=\epsilon'(n, v, d), \delta'=\delta'(n, v, d)<\delta(n, v, d)$ be sufficiently small so that the right hand side of (\ref{eq31}) is sufficiently small. Then fix $r_0$ be sufficiently small so that the right hand side of (\ref{eq32}) is sufficiently small. We may assume the condition of corollary \ref{cor1} is satisfied. Therefore, $B(p', \delta)$ is contractible in $B(p', 2\delta)$. In particular, $B(p, r)\subset B(p, \delta\frac{r}{\delta'^m})$ is contractible in $B(p, \frac{r}{\delta'^m})\subset B(p, \frac{r}{\delta'^N})$.
This concludes the proof of corollary \ref{cor2}.

\end{proof}

\section{\bf{Some applications to K\"ahler manifolds with nonnegative bisectional curvature}}

Let $(M, p)$ is a complete noncompact K\"ahler manifold with nonnegative bisectional curvature and maximal volume growth. Define $(M_i, p_i, g_i) = (M, p, \frac{g}{r_i^2})$ where $r_i\to\infty$. Cheeger-Colding theory says $(M_i, p_i)$ is getting closer and closer to metric cones. We may apply proposition \ref{prop4} to $(M_i, p_i)$. Let us say the holomorphic vector field is $Z_i$ and the flow $\sigma_t$ generated by $-Re Z_i$ converges to $o_i$.
Since $M_i$ is smooth K\"ahler manifold, at each point $o_i$, we may take a holomorphic chart on $B(o_i, \rho_i)$ which is also diffeomorphic to an Euclidean ball. We may assume $\lim\limits_{i\to\infty}\rho_i = 0$.
When $t$ is sufficiently large, $\sigma_t(B(p_i, 1))\subset B(o_i, \rho_i)$. This proves that $B(p_i, 1)$ is biholomorphic to a domain in $\mathbb{C}^n$.

\begin{claim}\label{cl4}
There exists some open set $B(p_i, \frac{1}{2})\subset U \subset B(p_i, \frac{3}{4})$ such that $U$ is diffeomorphic to $\mathbb{R}^{2n}$.
\end{claim}
\begin{proof}
Now we consider the inverse flow $\sigma_{-t}$. The hope is that for some large $t$, $\sigma_{-t}(B(o_i, \rho_i))$ would be the desired open set. However, in general, this might not be true. The problem is that some point might touch the boundary much earlier than other points. To overcome this difficulty, we cut off the holomorphic vector field $ReZ_i$. More precisely, let $f(u)$ be a smooth function with $f(u)= 1$ for $0\leq u\leq \frac{1}{2}$; $0< f(u)\leq 1$ for $\frac{1}{2}\leq u< \frac{3}{4}$; $f(u) = 0$ for $u\geq\frac{3}{4}$. Let $\sigma'_{-t}$ be the flow generated by $ReZ_i(x)f(r_i(x))$ ($r_i$ is the distance to $p_i$). Then $\sigma'_{-t}$ is not holomorphic, but it induces diffeomorphism. 
Since $d_i(p_i, o_i) = \Phi(\frac{1}{i})$ and $\rho_i\to 0$, $B(o_i, \rho_i)\subset B(p_i, \frac{1}{10})$ for large $i$.
Then $\sigma'_{-t}$ exists for all $t>0$ on $B(o_i, \rho_i)$ and $\sigma'_{-t}(B(o_i, \rho_i))\subset B(p_i, \frac{3}{4})$ for all $t>0$. Each orbit of $\sigma'_{-t}$ belongs to an orbit of $\sigma_{-t}$. According to the uniform convergence of $\sigma_t$,
there exists some $T>0$ such that if $t>T$, $\sigma'_{-t}(\partial B(o_i, \rho_i))\cap B(p_i, \frac{5}{8})=\emptyset$. Let $U=\sigma'_{-2T}(B(o_i, \rho_i))$. Then $U$ satisfies the claim.

\end{proof}

We may pull the open set $U$ back to the original manifold $M$. We obtain an exhaustion of $M$ by Euclidean balls. According to a theorem of Stallings, $M$ is homeomorphic to $\mathbb{R}^{2n}$. If $n\neq 2$, then $M$ is diffeomorphic to $\mathbb{R}^{2n}$. According to \cite{[L3]}, $M$ is biholomorphic to an affine algebraic variety. If $n=2$, by a theorem of Ramanujam, $M$ is biholomorphic to $\mathbb{C}^2$. we postpone the proof for general $n$ to the last section.

\section{\bf{Proof of theorem \ref{thm2}}}

\begin{proof}
According to the main theorem in \cite{[L4]}, $X$ is homeomorphic to an irreducible normal complex analytic space. Take $q\in X$ and a tangent cone $V$ at $q$. Let $\epsilon$ be a very small number. Then there exists $r>0$ such that $d_{GH}(B_X(q, 100r), B_V(o, 100r))<\epsilon r$. We may assume $r$ is sufficiently small.
First we give a separate proof for the case when $n=2$, since the argument is easier and more instructive. Then as $X$ is normal with dimension $2$, the possible singularities are all isolated. Without loss of generality, assume $q$ is an analytic singular point and a small punctured ball $B(q, 100r)\backslash\{q\}$ is analytically smooth.  Take a closed curve $\gamma$ in the small punctured ball $B(q, r)\backslash\{q\}$. We may assume that there exists some $\epsilon_1>0$ so that $\gamma\subset B(q, r)\backslash B(q, \epsilon_1r)$.
\begin{lemma}
$\gamma$ is contractible in $B(q, 10r)\backslash\{q\}$, if $\epsilon$ is small enough.
\end{lemma}
\begin{proof}
Consider $M_i\ni q_i\to q$ in the Gromov-Hausdorff sense. Then according to the argument in \cite{[L4]}, we may assume that the Gromov-Hausdorff convergence is in fact smooth in the complex analytic sense (not necessarily in metric sense) from $B(q_i, 50r)\backslash B(q_i, \frac{1}{100}\epsilon_1r)$ to $B(q, 50r)\backslash B(q, \frac{1}{100}\epsilon_1r)$. In particular, there exists a diffeomorphism from $\gamma\subset B(q, r)\backslash B(q, \epsilon_1r)$ to $U\subset\subset B(q_i, 50r)\backslash B(q_i, \frac{1}{100}\epsilon_1r)$.
We may life the curve $\gamma$ to $U$. It suffices to prove the image of $\gamma$ is contractible on $U$. This can be done by using the same argument as in claim \ref{cl4}. Basically we prove that the image of $\gamma$ lies between two topological balls. The details are omitted.

\end{proof}

Now we apply Mumford criteria \cite{[Mu]} to obtain that $q$ is in fact a smooth point for the normal variety $X$. Thus $X$ is complex analytically smooth. Let $z_1, z_2$ be a holomorphic chart around $q\in X$. According to lemma $7.1$ in \cite{[L4]}, we may find holomorphic functions $z_1^i, z_2^i$ on fixed size neighborhood of $q_i$ with $z_1^i\to z_1$, $z_2^i\to z_2$. By using a degree argument, one can verify that $z_1^i, z_2^i$ form a holomorphic chart on some fixed size neighborhood of $q_i$. 
The stability for dimension $2$ follows from a standard gluing argument.

\medskip

Next we consider the general $n$ dimensional case. Let $(V, o)$ be a tangent cone at $q$. Take a sequence $r_i\to 0$ such that the rescaled metrics $(M'_i, q'_i) = (M_i, q_i, \frac{g_i}{r_i^2})\to (V, o)$ in the pointed Gromov-Hausdorff sense. By corollary \ref{cor1}, for sufficiently large $i$, we can define a holomorphic vector field $Z_i$ on $B(q'_i, 100)$. By shifting $q'_i$ a little bit, we may assume the flow generated by $-Re Z_i$ converges to $q_i'$.
\begin{prop}\label{prop7}
Let $\sigma^i_t$ be the flow generated by $-Re Z_i$, $\rho_t$ be the flow on $(V, o)$ generated by $-r_V\frac{\partial}{\partial r_V}$ where $r_V$ is the distance to $o$. Then $\sigma^i_t$ converges to $\rho_t$ uniformly.  More precisely, if $y_i\in B(q'_i, 100)$ and $y_i\to y\in V$, then $\sigma^i_t(y_i)\to \rho_t(y)$ for any $t>0$.
\end{prop}
\begin{proof}
We need two lemmas.

\begin{lemma}\label{lm1}
Let $x$ be a regular point (in metric sense) around $o\in V$. Take a sequence $M'_i\ni x_i\to x$. Then $\sigma^i_t(x_i)$ converges to $\rho_t(x)$.
\end{lemma}
\begin{proof}
For simplicity, we assume $d(x, o) = 1$. The general case easily follows from a rescaling argument.
Let $C$ be a large constant, to be determined. Take a small geodesic ball $B(x, Cr)$ such that we have a holomorphic chart $U=(z_1, ..., z_n)$. According to lemma $7.1$ in \cite{[L4]}, if $C$ is large enough, we also have holomorphic charts $U_i=(z^i_1, ..., z^i_n)$ around $B(x_i, r)$ and $z^i_j\to z_j$. In view of (\ref{eq24}), by passing to subsequence if necessary, we may assume the holomorphic vector field $Z_i$ converges to a holomorphic vector field $Z$ on $U$. Then $\sigma^i_t$  converges on $B(x_i, \frac{r}{10})$ to a holomorphic map $\sigma_t$ for $|t|$ small.
\begin{claim}\label{cl5} For any sequence $t_k\to 0$, $\lim\limits_{k\to\infty}\frac{z_j(\sigma_{t_k}(x))-z_j(\rho_{t_k}(x))}{t_k} = 0$.
\end{claim}
\begin{proof}
We first blow up $x\in V$. Then $(V_k, x''_k, \tilde{d}_k) = (V, x, \frac{d}{t_k})\to (\mathbb{R}^{2n}, 0)$. Let $\Phi_k$ be the Gromov-Hausdorff approximation from $(M'_k, q'_k)$ to $(V, o)$. Below we shall pass to subsequence of $M'_k$ which is still denoted by $M'_k$. That is to say, for each $k$, $M'_k$ is arbitrarily close to $V$ as we want.  Let us say $\Phi_k$ is a $t_k$-Gromov-Hausdorff approximation from $B_{N_k}(x'_k, \frac{1}{t_k})$ to $B(x''_k, \frac{1}{t_k})$, where $(N_k, x'_k, d'_k)=(M'_k, x_k, \frac{d_k}{t_k})$.  As before, consider the holomorphic coordinates $(w^k_1, ..., w^k_n)$ around $x'_k$ satisfying
$w^k_s(x'_k) = 0$, $(w^k_1, ..., w^k_n)$ is a $\Phi(\frac{1}{k})$-GH approximation to the Euclidean ball $B(0, 100)$. We may further assume that $w^k_1 = f_k$ as constructed in (\ref{eq18}). We may regard $Z_k$ as a holomorphic vector field on $N_k$. Define $Z'_k = t_kZ_k$. Let $\sigma'_t$ be the flow generated by $-Re Z'_k$ on $(N_k, x'_k)$.
We have \begin{equation}\label{eq34}\dashint_{B(x'_k, 100)}|\langle dw^k_i, \overline{dw^k_j}\rangle-2\delta_{ij}|^2<\Phi(\frac{1}{k}).\end{equation} As $d(x, o) = 1$, by using the same argument as in (\ref{eq19}),  \begin{equation}\label{eq35}|\langle Re Z'_k, dw^k_i\rangle-\delta_{i1}|<\Phi(\frac{1}{k}).\end{equation} Therefore \begin{equation}\label{eq36}|w^k_i(\sigma'_1(x'_k))+\delta_{i1}|<\Phi(\frac{1}{k}).\end{equation} On the other hand, we may require that for any $z\in B(x_k', 2)$, \begin{equation}\label{eq37}|Re w^k_1(z) - \frac{d_{M'_k}(q'_k, z)}{t_k}+\frac{d(o, x)}{t_k}|<\Phi(\frac{1}{k}).\end{equation} By definition of $\rho_t$, 
\begin{equation}\label{eq38}|w^k_i(\overline{\rho_{t_k}(x))}+\delta_{i1}|<\Phi(\frac{1}{k}),\end{equation} where $\overline{\rho_{t_k}(x)}$ is a preimage of $\rho_{t_k}(x)$ on $N_k$ under $\Phi_k$.  We have \begin{equation}\label{eq39}|w^k_i(\sigma'_1(x'_k))-w^k_i(\overline{\rho_{t_k}(x)})|<\Phi(\frac{1}{k}).\end{equation} The gradient estimate says $|dz^k_j|\leq C$ on $B(x_k, \frac{r}{10})$, where $C$ is independent of $k, j$.  By Cauchy estimate, $|\frac{\partial z^k_j}{\partial w^k_s}|\leq Ct_k$. Then \begin{equation}\label{eq40}|z^k_j(\sigma'_1(x'_k))-z^k_j(\overline{\rho_{t_k}(x)})|<t_k\Phi(\frac{1}{k}).\end{equation} If $M_k'$ is sufficiently close to $V$, we can ensure that \begin{equation}\label{eq41}\frac{z^k_j(\sigma'_1(x'_k))-z_j(\sigma_{t_k}(x))}{t_k}=\Phi(\frac{1}{k}),  \frac{z^k_j(\overline{\rho_{t_k}(x)})-z_j(\rho_{t_k}(x))}{t_k}=\Phi(\frac{1}{k}).\end{equation} (\ref{eq40}) and (\ref{eq41}) give the proof of claim \ref{cl5}.

\end{proof}
Observe that the argument in claim \ref{cl5} works for any regular point. Since $\rho_t(x)$ are all regular points for $t>0$, $\frac{d\rho_t(x)}{dt} = (-Re Z)(\rho_t(x))$ for all $t$. By definition, $\frac{d\sigma_t(y)}{dt} = (-Re Z)(\sigma_t(y))$ for all $t, y$. Then by the uniqueness of the integral curve generated by $-Re Z$, we find $\sigma_t(x) = \rho_t(x)$ for all small $t>0$. This completes the proof of lemma \ref{lm1}.

\end{proof}
For later purposes, let us note the following
\begin{cor}\label{cor00}
Let $f_i$ be a sequence of holomorphic functions on $B(q_i', 10)$ so that $f_i\to f_\infty$ uniformly on each compact set. Then $(-Re Z_i)(f_i)\to (-r_V\frac{\partial}{\partial r_V})f_\infty$ uniformly on $B(q_i', 2)$.
\end{cor}
\begin{proof}
Claim \ref{cl5} says we have the convergence on the regular points on the limit space. Note that $|(-Re Z_i)(f_i)|$ has uniform $L^2$ bounds on $B(q_i', 5)$, as $Z_i$ has uniform $L^2$ bound. Mean value inequality gives the uniform bounds for $|(-Re Z_i)(f_i)|$, hence its gradient bound. As regular points are dense on limit space, this completes the proof.
\end{proof}

\begin{lemma}\label{lm2}Let $y_i\in M'_i$, $y\in V$ and $y_i\to y$.
Then there exist $N>0, r>0, r'>0$ which are independent of $i$ such that for all sufficiently large $i$, the following holds:

1. there exist a sequence of holomorphic embeddings $F_i$: $B(y, r')\to \mathbb{C}^N$,  $F:$ $B(y, r')\to \mathbb{C}^N$.

2. $F_i\to F$, if we compose with the Gromov-Hausdorff approximation. Thus, the image of $B(y_i, r')$ converges to the image of $B(y, r')$ in the Hausdorff topology of $\mathbb{C}^N$.

3. $F_i(y_i) = F(y) = 0\in\mathbb{C}^N$.

4. $B(y_i, r)\subset F_i^{-1}(B_{\mathbb{C}^N}(0, 1))\subset\subset B(y_i, r')$ and $B(y, r)\subset F^{-1}(B_{\mathbb{C}^N}(0, 1))\subset\subset B(y, r')$.\end{lemma}
\begin{proof}
According to the main theorem in \cite{[L4]}, $(V, o)$ is a normal complex analytic space. Thus for some small $a, b>0$ and some large $C>0$, we may holomorphically embed $B(y, Ca)$ in some $(\mathbb{C}^{N'}, 0)$ by map $F'$ ($Ca$ is still small). Say $F'(y) = 0\in\mathbb{C}^{N'}$. Also we may assume $B(y, 2b)\subset F'^{-1}(B(0, 1))\subset\subset B(y, a)$. Let the coordinates on $\mathbb{C}^{N'}$ be $z_1, ..., z_{N'}$. According to lemma $7.1$ in \cite{[L4]}, we may find holomorphic functions $z^i_1, ..., z^i_{N'}$ on $B(y_i, 2a)$ so that $z^i_s\to z_s$ for all $s$.  By shrinking $a$ if necessary, we may assume for each $B(y_i, 2a)$, there exists a holomorphic flow $\sigma^i_t$ retracting to $y_i^{''}$. Consider the local holomorphic coordinates $U_i=(w^i_1, ..., w^i_n)$ near $y_i^{''}$ with $w^i_j(y_i^{''}) = 0$ for all $j$. By scaling, we may assume $|w^i_j|\leq \frac{1}{2^i}$ for all $i, j$. Note that the size of $U_i$ could go to zero. For each $i$, there exists large $t_i$ so that $\sigma^i_{t_i}(B(y_i, 2a))\subset U_i$. We can pull the coordinate back to $B(y_i, 2a)$ via $\sigma^i_{t_i}$. Say the new coordinate on $B(y_i, 2a)$ is still denoted by $(w^i_1, .., w^i_n)$. Take $r=b$, $r'=2a$, $N=N'+n$. One can verify that the holomorphic maps $F_i = (z^i_1, ..., z^i_{N'}, w^i_1, .., w^i_n) $ and $F=(z_1, ..., z_{N'}, 0, 0, 0, .., 0)$ satisfy the lemma.
\end{proof}
Let $z_1, .., z_N$ be coordinates of $\mathbb{C}^N$. We identify $B(y, r)$ and $B(y_i, r)$ with their images in $\mathbb{C}^N$.
By (\ref{eq26}), for each $t$ small, $\sigma^i_t$ is bounded. Thus $z_j(\sigma^i_t)$ is uniformly bounded for $j = 1, ..., N$. We can extract a subsequence such that $z_j(\sigma^i_t)$ all converge uniformly on $B(y_i, r)$.
Lemma \ref{lm2} says that $\sigma^i_t(B(y_i, r))$ converges uniformly. Notice $y$ is arbitrary. Since regular points are dense, by lemma \ref{lm1}, 
we conclude the proof of proposition \ref{prop7}.
\end{proof}

From proposition \ref{prop7}, we see $-r_V\frac{\partial}{\partial r_V}$ is a holomorphic vector field on $V$. Given any holomorphic function $f$ around $o\in V$, we may write $f$ as an infinite sum of homogeneous harmonic functions (basically we just do the spectral decomposition on the cross section). We claim that each homogeneous function appeared must be holomorphic. For instance, to show the lowest degree harmonic function (say degree $a$) is holomorphic, one verifies that it is the limit of $f(\sigma_t(x))e^{at}$ as $t\to+\infty$.
By subtracting the first function, one can show the remaining homogeneous harmonic functions are all holomorphic.

Let $z_1, ..., z_{N'}$ be holomorphic functions on $V$ which give a local holomorphic embedding near $o$.  Let us say $z_s(o)=0$ for all $s$. Now we use some argument in \cite{[DS2]}.
Consider the restriction of $z_1, ..., z_{N'}$ on $V$. We write $z_s = \sum z^\alpha_s$ where $z^\alpha_s$ are all homogeneous holomorphic functions as in the last paragraph. Then we extend each $z^\alpha_s$ to a holomorphic function on $B(0, 1)$ of $\mathbb{C}^N$. 
We may require that the sum is still equal to $z_s$ on $B(0, \frac{1}{2})$, since there is a bounded extension of holomorphic functions. See for example, corollary $4$ on page $157$ of \cite{[GR]}. Then for each $s$, we can find some $z^\alpha_s$ so that $\det(\frac{\partial z^{\alpha}_s}{\partial z_s})\neq 0$ at $0$. According to the implicit function theorem, these $z^\alpha_s$ form a local holomorphic chart in a small neighborhood of $0$ in $\mathbb{C}^N$.  Thus we obtain a global holomorphic embedding from $V$ to $\mathbb{C}^{N'}$ by these $z^\alpha_s$. For notational convenience, we still denote the homogeneous coordinates by $z_s$. Consider the integral ring $R$ generated by functions $z_s$ on $V$. By using the three circle theorem in \cite{[L1]}, we can prove the dimension estimate $dim(\mathcal{O}_d(V))\leq Cd^n$ as in the smooth case. Here $\mathcal{O}_d(V)$ denotes polynomial growth holomorphic functions with degree bounded by $d$. By a dimension counting argument, we see that the affine algebraic variety defined by $R$ has dimension $n$. Then we can verify that $V$ is biholomorphic to the affine algebraic variety defined by $R$. Since the argument is very similar to section $7$ in \cite{[L3]}, we skip the details.

We may find $C>0$ so that $(z_1, .., z_{N'})^{-1}B_{\mathbb{C}^N}(0, 10)\subset\subset B(o, C)$. Moreover, $(z_1, .., z_{N'})$ is a holomorphic embedding on $B(o, 2C)$.
Next we lift these $z_s$ to $M'_i$, say $z^i_s\to z_s$ uniformly on $B(q'_i, C)$. We add coordinate functions $w^i_k (k=1, .., n)$ as introduced in the proof of lemma \ref{lm2}. Set $N=N'
+n$. Then we can define embeddings $F_i\to F$ as $(z^i_1, .., z^i_{N'}, w^i_1, ..., w^i_n)$ and $(z_1, .., z_{N'}, 0, 0, .., 0)$. We identify $V$ and $B(q'_i, C)\subset M'_i$ with their image in $\mathbb{C}^{N}$. Let $(z_1, .., z_N)$ be the coordinate on $\mathbb{C}^N$.
Define a holomorphic vector field $Y$ on $\mathbb{C}^{N}$ by $Y=-\sum_j\alpha_jz_j\frac{\partial}{\partial z_j}$ where $\alpha_j\geq 0$ is the degree of $z_j$. For $j>N'$, we set $\alpha_j=0$. Then $Re Y$ coincides with the vector field $-r_V\frac{\partial}{\partial r_V}$ on $V$. Observe on the intersection of the unit sphere in $\mathbb{C}^N$ and $V$, \begin{equation}\label{eq42}Re Y(\sum |z_j|^2)<0.\end{equation} Note also that $\alpha_j\geq 1$ if $z_j$ is not zero. Otherwise, there exists some homogeneous function which is of sublinear growth. Then by gradient estimate, it must be constant, hence, identically zero.
Since $-Re Z_i(z_j)$ (holomorphic) is uniformly convergent to $-Re Y(z_j)$ on $B(o, \frac{C}{2})$, we have for sufficiently large $i$, \begin{equation}\label{eq-1}-Re Z_i(\sum |z_j|^2)<-\frac{1}{10}\end{equation} on the intersection of the unit sphere in $\mathbb{C}^N$ and $M'_i$. We have used $\alpha_j\geq 1$.

\begin{prop}\label{prop8}Given any $n\in\mathbb{N}, v>0$, there exist $\epsilon = \epsilon(n, v)>0, \delta=\delta(n, v)>0$ so that the following holds:
let $(M^n, p)$ be a complete K\"ahler manifold with bisectional curvature lower bound $-\epsilon^3$. Assume $vol(B(p, \frac{1}{\epsilon}))\geq \frac{v}{\epsilon^{2n}}>0$ and $B(p, \frac{1}{\epsilon})$ is $\epsilon$-Gromov-Hausdorff close to a geodesic ball centered at the vertex of $(\mathbb{R}^{2k}, 0)\times (Y, o)$.  Here $(Y, o)$ is a metric cone. Then there exists an open set $U$ and a map $F$ satisfying $B(p, \delta)\subset U\subset B(p, 1)$, $F: \mathbb{D}^k\times Z\to U$ is a biholomorphism. Here $\mathbb{D}^k$ is a polydisk in $\mathbb{C}^k$ and $Z$ is a complex manifold. Finally, for any two points $y_1, y_2\in B(p, \delta)\cap F((0, 0, .., 0)\times Z)$, there exists a curve $l\subset B(p, g(\delta))\cap F((0, 0, .., 0)\times Z)$ connecting $y_1, y_2$. Here $g$ is a continuous increasing function on $\mathbb{R}$ depending only on $n, v$ and $g(0) = 0$. Also, $g(\delta)<\frac{1}{10}$.
\end{prop}
\begin{proof}
We will assume $\epsilon$ is as small as we want. Eventually we see its value depends only on $n, v$.
By proposition \ref{prop1} and similar arguments in section $3$, we find holomorphic functions $z_1, .., z_k$ and holomorphic vector fields $X_1, .., X_k$ on $B(p, 1)$ ($X_s$ is obtained by perturbing the gradient of the harmonic functions) such that $(z_1, .., z_k)$ almost gives the splitting of the factor $\mathbb{R}^{2k}$.  
Also, \begin{equation}\label{eq43}|X_i(z_j)-\delta_{ij}|<\Phi(\epsilon|n, v).\end{equation} Let us assume $z_s(p) = 0$ for all $s$. Now we use induction. When $k=0$, the conclusion is trivial. Assume the proposition is true for $k=m-1$ and $U_{m-1}=\mathbb{D}^{m-1}\times Z_{m-1}$ where $Z_{m-1}$ is given by the zeros of $z_1, ..., z_{m-1}$.  We shall prove it $k=m$. Let $\epsilon_{m-1}, \delta_{m-1}$ be the corresponding constants in proposition \ref{prop8} for $k=m-1$.
\begin{claim}\label{cl6}
There exist holomorphic functions $c_1, ..., c_{m-1}, c_m$ with $|c_i|(1\leq i\leq m-1)$ and $|1-c_m|$ very small such that if $W_m = c_mX_m-\sum\limits_{i=1}^{m-1}c_iX_i$, then $W_m(z_j)=\delta_{jm}$.
\end{claim}
\begin{proof}
Just use linear algebra, in view of (\ref{eq43}).
\end{proof}
The claim says $W_m$ is a holomorphic vector field on $Z_{m-1}$. Define $Z_m$ be the zeros of $z_1, ..., z_m$. Then $Z_m$ is smooth by claim \ref{cl6}. Let $\sigma_t$ be the flow on $Z_{m-1}$ generated by $W_m$ for $t\in\mathbb{C}$. Define a holomorphic map $\sigma: Z_m\times \Delta_m$ ($\Delta_m = \{t\in\mathbb{C}||t|<\gamma\}$) to $Z_{m-1}$ as 
$\sigma(x, t) = \sigma_t(x)$. Here $\gamma=\gamma(n, v)$ is small, to be determined.
\begin{claim}\label{cl7}
If $\gamma$ is small, $\sigma$ defines a biholomorphic map onto its image which contains $B(p, \delta_m)\cap Z_{m-1}$ for some $\delta_m=\delta_m(n, v)>0$.
\end{claim}
\begin{proof}
By using the same argument as in section $3$, we see that if $\gamma=\gamma(v, n)$ is small and $x\in Z_m\cap B(p, \frac{1}{10})$, $\sigma(t, x)\in B(x, \frac{1}{10})$ for $|t|<\gamma$.
By claim \ref{cl6}, $z_j(\sigma(x, t)) = t\delta_{jm}$. Therefore $\sigma(x, t)$ is injective for $x\in Z_m\cap B(p, \frac{1}{10})$, $|t|<\gamma$. If $\delta_m$ small and $y\in B(p, \delta_m)\cap Z_{m-1}$, $\sigma(y, -z_m(y))\in Z_m$. \end{proof}
Let $y_1, y_2\in B(p, \delta_m)\cap Z_{m}$. We may assume $\delta_m<\delta_{m-1}$. By induction hypothesis, there exists a curve $l\subset B(p, g_{m-1}(\delta_m))\cap Z_{m-1}$ connecting $y_1, y_2$. Here $g_{m-1}$ is a continuous increasing function with $g_{m-1}(0)= 0$. If $\delta_m=\delta_m(n, v)$ is small enough, we can project $l$ to $Z_m$ via $\sigma$. The image lies in $B(p, g_m(\delta_m))$ for some continuous increasing function $g_m$ depending only on $n, v$. Of course, we may assume $g_m(\delta_m)<\frac{1}{10}$. The last assertion is verified.  The proof of proposition \ref{prop8} is complete.
\end{proof}
\begin{remark}
In the proposition above, we have identified $\mathbb{D}^k = \Delta_1\times \Delta_2\times\cdot\cdot\times \Delta_k$ where $\Delta_k$ is defined right above claim \ref{cl7}.
Let us endow $\mathbb{D}^k$ with the product metric on the right hand side. Let the distance on $Z_m$ be induced by the distance on $M$. Then by a limiting argument, we can prove that the biholomorphic map $F$ is a $\Phi(\epsilon|n, v)$-Gromov-Hausdorff approximation. 
\end{remark}
\begin{prop}\label{prop9}
Let $(M^n_i, p_i)$ be a sequence of pointed K\"ahler manifolds converging to $X=\mathbb{C}^{k}\times (V, o)$ in the Gromov-Hausdorff sense, where $(V, o)$ is a metric cone. Assume the bisectional curvature of $M^n_i$ has lower bound $-\Phi(\frac{1}{i})$ and $vol(B(p_i, r))\geq cr^{2n}$ for any $0<r<R_i$, where $c$ is a positive constant and $R_i\to\infty$.
Then $V$ is homeomorphic to an irreducible normal complex analytic variety. 
\end{prop}
\begin{proof}
By applying proposition \ref{prop8} to $M_i$, we construct holomorphic vector fields $W^i_j(1\leq j\leq k)$ and holomorphic functions $z^i_s$ so that $W^i_j(z^i_s) = \delta_{js}$. 
Let $\sigma_{ij}(t)$ be the biholomorphisms induced by $W^i_j$. Then after passing to subsequences, $\sigma_{ij}(t)\to\sigma_{j}(t)$ which induces biholomorphism on the limit space. Also $z^i_s\to z_s$. Now set $\Sigma$ be the zero set of $z_1, .., z_k$. We should regard $\Sigma$ as a closed subscheme induced by the ideal generated by $z_1, .., z_k$. Since $X$ is irreducible, by using projections as in the last part of the proof of proposition \ref{prop8}, we see that the regular points of $\Sigma$ are connected. By claim \ref{cl6}, we see $\Sigma$ is reduced. Therefore $\Sigma$ is integral. We can also verify that $X$ is isomorphic to $\Sigma\times\mathbb{C}^k$ as a complex space. Since $X$ is normal, $\Sigma$ must be normal. Also $X$ is isometric to $\Sigma\times\mathbb{C}^k$ (the metric on $\Sigma$ is induced from $X$), since the coordinate functions $z_1, ..., z_k$ are Euclidean splitting factors.

\end{proof}
\begin{prop}\label{prop10}
In proposition \ref{prop9}, If $k\geq n-3$, then $X$ is in fact a complex manifold.\end{prop}
\begin{proof}
If $k=n$ or $n-1$, the conclusion follows from sec $6$ of \cite{[L4]}. 
Let $Z_i$ be the holomorphic vector field in (\ref{eq0}). By shifting $p_i$ a little bit if necessary, we may assume that the flow generated by $-Re Z_i$ converges to $p_i$.

\emph{Hypothesis}: $V$ is complex analytically smooth away from $o$.

We first assume the hypothesis above.
According to the analysis right above proposition \ref{prop8}, we find local embeddings of $(M_i, p_i)$ and $X$ to $\mathbb{C}^N$. We have assumed all coordinate functions are homogeneous on $X$. Also the embedding maps $p_i$ to the origin of $\mathbb{C}^N$. Furthermore, $M_i$ (local part containing $p_i$) converges to $X$ in the Hausdorff topology in $\mathbb{C}^N$. Now we use the same notations as in proposition \ref{prop9}. Since the holomorphic functions $z^i_s$ converge to $z_s$, the Hausdorff limit of $Z^i_k$ is contained in $\Sigma$.  Here $Z^i_k$ is the common zeroes of $z^i_1, .., z^i_k$. For simplicity, we may assume $z_s (1\leq s\leq k)$ are contained in the coordinate functions of $\mathbb{C}^N$ (if not, we just add them to the coordinate. Notice that these $z_s$ are homogeneous of degree $1$ on $X$). On the other hand, given any point on $\Sigma$, we first find a point on $M_i$ which is very close to that point. Then by using the flow introduced in proposition \ref{prop8}, one can find a nearby point so that the holomorphic functions $z^i_s$ all vanish. This proves that $Z^i_k$ converges to $\Sigma$ in the Hausdorff sense. One also verifies that the limit of $Z^i_k$ has multiplicity $1$ by claim \ref{cl6}. We identify $V$ and $\Sigma$. Now pick a point $p\in \Sigma\backslash 0$. By the hypothesis above, $p$ is a regular point on $\Sigma$. Let $A$ be the intersection of $\Sigma$ and the unit sphere of $\mathbb{C}^{N}$.  Observe that $\Sigma$ is invariant under the flow generated by the real holomorphic vector field $-r\nabla r$ ($r$ is the distance to the vertex $(0, o)\in X=\mathbb{C}^k\times V$). According to (\ref{eq42}), $A$ is transverse to the $-r\nabla r$ on $\Sigma$. Therefore, $A$ is smooth. Let $A_i$ be the intersection of the unit sphere in $\mathbb{C}^N$ and $Z^i_k$ for large $i$. Then $A_i$ is diffeomorphic to $A$ for large $A$, since $A$ is compact and smooth. $A$ and $A_i$ admit natural contact structure induced by the Levi form. For this part, please refer to section $1$ and section $2$ of \cite{[Mc]}.
Due to the stability of contact structure (theorem $2.2$ in \cite{[Mc]}), $A$ is in fact contactomorphic to $A_i$ for sufficiently large $i$.

 Let $W^i_j$ be the holomorphic vector field so that $W^i_j(z^i_s) = \delta_{js}$. The argument is the same as in claim \ref{cl6}.
Define a holomorphic vector field $H_i = Z_i - \sum\limits_{j=1}^kc_{ij}W^i_j$. Here $c_{ij}$ are holomorphic functions so that $H_i$ is tangential to $Z^i_k$. 
\begin{claim}\label{cl8}
$\lim\limits_{i\to\infty}\langle Z_i, dz^i_s\rangle=0$ for $1\leq s\leq k$ on $A_i$. 
\end{claim}
\begin{proof}
Observe $\langle Z_i, dz^i_s\rangle$ is holomorphic. If $i\to\infty$, by passing to subsequence, we have uniform convergence on $A_i$. Notice in the limit case, $\langle r\frac{\partial}{\partial r}, dz_s\rangle = 0$ on $A$. This concludes the proof.
\end{proof}

Claim \ref{cl8} implies that $c_{ij}$ are small functions on $A_i$. Combining this with (\ref{eq-1}), we find $-Re H_i(\sum|z_j|^2)<0$ on $A_i$. On $Z^i_k$, let $\Phi^i_t$ be the biholomorphism generated by $-Re H_i$. Notice that the open set on $Z^i_k$ bounded by $A_i$ is connected: given any two points $a, b$ there, connect them by a shortest geodesic $L$ on $M_i$. We can project $L$ to $Z^i_k$ by using the flow $\sigma$ as in proposition \ref{prop8}. Say the image of $L$ is $L'$. Notice $L'$ is contained in a uniformly bounded set of $Z^i_k$. For any $R>0$, if $i$ is large, by using the same argument as above, we may assume that $-Re H_i(\sum|z_j|^2)<0$ on $(B_E(0, R)\backslash B_E(0, 1))\cap Z^i_k$ ($E$ is the Euclidean metric on $\mathbb{C}^N$).
Thus for large $t$, $\Phi^i_t(L')$ is contained in the domain bounded by $A_i$ on $Z^i_k$. This proves the connectness.
Then by the same argument as in proposition \ref{prop5}, the flow $\Phi^i_t$ converges to a point. According to our assumption in the beginning of proposition \ref{prop10}, the flow generated by $-Re Z_i$ converges to $p_i$ which is $0\in\mathbb{C}^N$. Thus $Z_i$ vanishes at $0$. Therefore, $c_{ij}$ all vanish at $0$.
This implies that $\Phi^i_t$ is retracting to $0$. Now we freeze $i$ for a moment. Let $B_i = \frac{d\Phi^i_t}{dt}|_{t=0}$ on $T_0Z^i_k$. By Schwarz lemma, the real part of the eigenvalues of $B_i$ are all negative. Consider a coordinate $U$ given by $(\tilde{z}_1, ...,\tilde{z}_{n-k})$ around $0$ of $Z^i_k$.  We may assume that $B_i$ has the Jordan normal form.
In particular, $B_i$ is an upper triangular matrix. Then the real part of entries of the main diagonal are all negative. By rescaling each $\tilde{z}_j$ by some positive factors, we may assume that the absolute values of entries off the main diagonal are all very small. We still denote the new coordinate system by $(\tilde{z}_1, ..., \tilde{z}_{n-k})$. Now for any $z\in U$,  define $Q(z, \overline{z}) = \sum|\tilde{z}_j(z)|^2$. For very small $\epsilon>0$, define $D=\{z\in U|Q(z, \overline{z})=\epsilon\}$ which is contactomorphic to the standard sphere $\mathbb{S}^{2(n-k)-1}$. 
\begin{lemma}\label{lm3}
For any $z\in D$, $\frac{dQ(\Phi^i_t(z), \overline{\Phi^i_t(z)})}{dt}|_{t=0}<0$. Therefore, $-Re H_i$ is pointing inside the sphere $D$ transversely.\end{lemma}
\begin{proof}
According to the assumptions of $B_i$, for any $z\in\mathbb{C}^{n-k}$ (we consider $z$ as a column vector), $z^TB^T_i\overline{z} + z^T\overline{B}_i\overline{z}<-\delta|z|^2$ for some $\delta>0$. Note that near the origin, by using the coordinate $(\tilde{z}_1, ..., \tilde{z}_{n-k})$, we have the vector field $H_i(z) = -2B_iz+O(|z|^2)$.  The factor $-2$ comes from the assumption that $\Phi^i_t$ is generated by $-Re H_i$. Therefore \begin{equation}\label{eq44}\frac{dQ(\Phi^i_t(z), \overline{\Phi^i_t(z)})}{dt}|_{t=0} = z^TB^T_i\overline{z} + z^T\overline{B}_i\overline{z} +o(|z|^2)<0.\end{equation}

\end{proof}
Recall that $Re H_i$ is transverse to $A_i$ and inside the domain bounded by $A_i$ on $Z^i_k$, $Re H_i$ is nonvanishing except at $0$. According to lemma \ref{lm3}, $A_i$ and $A$ are diffeomorphic to $\mathbb{S}^{2(n-k)-1}$. 

\medskip
\begin{claim}\label{cl9}
The hypothesis is satisfied for $k=n-2$.
\end{claim}
\begin{proof}
For any point $a\in\Sigma\backslash\{0\}$, take a tangent cone $W$ at $a$. Then $W$ splits off $\mathbb{R}^{2n-2}$.  Then we apply the result for $k=n-1$ to see that $W$ is in fact smooth. We can pull the holomorphic chart back to a small neighborhood of $a$. By a degree argument, we verify that this is a holomorphic chart around $a$. 
\end{proof}

If $k=n-2$, from the Mumford criteria \cite{[Mu]}, we see that $\Sigma$ is in fact smooth. This concludes the proof for $k=n-2$.
For $k=n-3$, we can use the same argument as in claim \ref{cl9} to show that the hypothesis is satisfied.
Next we need a result on the contact structure of boundary of strictly pseudoconvex domains:
\begin{prop}\label{prop11}
Let $V_1, V_2$ be strictly pseudoconvex domains in $\mathbb{C}^n$ with smooth boundary. Assume the closure of $V_2$ is contained in $V_1$.
Let $X$ be a real holomorphic vector field ($X$ induces biholomorphism) defined in a neighborhood of $V_1$ which points inward the boundary of $V_1$ and the boundary of $V_2$.
Assume $X$ has only one zero point $p$ inside $V_2$ and the flow generated by $X$ is retracting to $p$ on $V_1$. Then the boundary of $V_1$ is contactomorphic to the boundary of $V_2$.
\end{prop}
\begin{proof} For notational convenience, we simplify plurisubharmonic function as psh function. It suffices to construct a strictly psh function $f$ without critical point such that the boundary of $V_1$ and boundary of $V_2$ are all regular level sets.

Let $G(t)$ be the flow generated by $X$. By using $G(t)$, we can biholomorphically push $V_1$ and $V_2$ sufficiently close to the attraction point $p$. So without loss of generality, we may assume there exists some $t_0<0$ such that $G(-t_0)(V_2)$ is well-defined and the closure of $V_1$ is contained in $G(-t_0)V_2$.
Let $S_1$ be the boundary of $V_1$, $S_2$ be the boundary of $V_2$.
Consider the sets $G(-t)(S_2)$ for $0\leq t\leq t_0$. Then it is clear that they are all strictly pseudoconvex.
On $G(-t_0)B\backslash V_2$, we can find a smooth function $g_1$ satisfying
\begin{itemize}
\item $g_1 = 0$ on $S_2$.
\item $g_1$ is constant on the sets $G(-t)(S_2)$ for each $t$.
\item $g_1$ is strictly decreasing along the vector field $X$.
\end{itemize}
We can find an increasing convex function $u$ so that $u(g_1)$ is strictly psh.
Set $f_1 = u(g_1)$.
Similarly, we can construct a strictly psh function $f_2$ which satisfies
\begin{itemize}
\item $f_2$ is constant on sets $G(t)(S_1)$ for each $t\geq 0$. 
\item $f_2$ is strictly greater than the maximum of $f_1$ on $S_1$. 
\item $f_2$ is strictly less than $f_1$ on $S_2$.
\item $f_2$ is strictly decreasing along $X$.
\end{itemize}
More precisely, there exists $\delta>0$ so that $G(\delta)(V_1)$ contains the closure of $V_2$. Define a function $g_2$ which is constant on sets $G(t)(S_1)$ for each $t\geq 0$ and $g_2$ is strictly decreasing along $X$. Then we find an increasing convex function $v$ so that $v(g_2)$ is strictly psh. By subtracting a large number, we may assume $v(g_2)<-100$ for all $t>\delta$.
Now let $w$ be an increasing smooth convex function satisfying that $w(y) = y$ for $y\leq v(g_2(G(\delta)(S_1))$; $w$ is increasing sufficiently fast so that $w(v(g_2(G(\frac{\delta}{2})S_1)))>\sup f_1$ on $V_1$. Set $f_2 = w(v(g_2))$. Then $f_2$ satisfies the conditions above.
Now let $f$ be the max of $f_1$ and $f_2$. We can mollify the function $f$ so that it is strictly psh and has no critical point, since the derivatives of $f_1$, $f_2$ along $X$ are all strictly negative. For this part, check corollary $3.20$ of \cite{[E]}.
This concludes the proof of proposition \ref{prop11}.
\end{proof}

We can apply proposition \ref{prop11} to $A_i$ and $D$ which are introduced above lemma \ref{lm3}. As $D$ is contactomorphic to the standard sphere, $A_i$ and $A$ are all contactomorphic to the standard sphere. We need the following result of Mclean:
\begin{prop}\label{prop12}\cite{[Mc]}
Let $V$ be a normal variety of dimension $3$ with isolated singularity $p$. Assume the link of $V$ is contactomorphic to the standard sphere $\mathbb{S}^5$, then $p$ is a smooth point.
\end{prop}
 We just apply Mclean's theorem for the case $k=n-3$.
This concludes the proof of proposition \ref{prop10}.
\end{proof}
\begin{remark}
Corollary $1.4$ of \cite{[Mc]} states that if the so called Shokurov conjecture in algebraic geometry (we skip the statement) is true, then proposition \ref{prop12} holds for any dimension. As a consequence, if the Shokurov conjecture is true, $X$ is a complex manifold, i.e., no singularity appears. 
\end{remark}
Now we consider the setting as in theorem \ref{thm2}.
Pick a point $q$ in the limit space $X$. If a tangent cone splits off $\mathbb{R}^{2k}$ where $k\geq n-3$, then according to proposition \ref{prop10},
the tangent cone is complex analytically smooth. By lifting the holomorphic chart to $M_i$ as in claim \ref{cl9},  we find that $q$ is a complex analytically smooth point on $X$. 
According to the dimension estimate of singular set in Cheeger-Colding \cite{[CC2]}, metric singularities whose tangent cones do not split $\mathbb{R}^{2n-6}$ have Hausdorff dimension at most $2n-8$. Since the distance function induced by holomorphic coordinates are bounded by the metric (each coordinate function has locally bounded gradient), we find that the complex analytic singularity of $X$ has complex codimension at least $4$. This concludes the first part of theorem \ref{thm2}.

\medskip
For the second part of theorem \ref{thm2}, we use a similar argument as in Perelman's proof of the stability theorem \cite{[P]}\cite{[K]}.
\begin{prop}\label{prop13}Given any $n\in\mathbb{N}$, $v>0$, there exist $\epsilon=\epsilon(n, v)>0, \delta=\delta(n, v)>0$ so that the following holds.
Let $(M_i, p_i)$ be a sequence of pointed K\"ahler manifolds converging to $(X, p)$ in the Gromov-Hausdorff sense. Assume the bisectional curvature of $M_i$ is bounded from below by $-\epsilon^3$ and $vol(B(p_i, \frac{1}{\epsilon}))\geq \frac{v}{\epsilon^{2n}}$. Assume that $B(p, \frac{1}{\epsilon})$ is $\epsilon$-Gromov-Hausdorff close to $B_V(o, \frac{1}{\epsilon})$, where $(V, o)$ is a metric cone isometric to $\mathbb{R}^{2k}\times W$. Then there exists an open set $B(p, \delta)\subset U\subset B(p, 1)$ so that $U$ is biholomorphic to a product $\mathbb{D}^k\times Z$ where $Z$ is an irreducible normal complex analytic space.  
\end{prop}
\begin{proof}
The proof is almost the same as in proposition \ref{prop9}. We skip the argument here. \end{proof}
The following is a local stability result.
\begin{prop}\label{prop14}
Under the same assumptions of proposition \ref{prop13}, there exists $\gamma=\gamma(n, v)>0$ so that we can find a homeomorphism $\Phi_i$ from an open set $B(p_i, \gamma)\subset U_i\subset B(p_i, 1)$ to $B(p, \gamma)\subset U\subset B(p, 1)$ respecting the holomorphic factor $\mathbb{D}^{k}$. Also $\Phi_i$ is a $\Phi(\epsilon)$-Hausdorff approximation of subsets in $\mathbb{C}^N$, if we consider the embeddings as in lemma \ref{lm2} (the conical structure of $V$ in lemma \ref{lm2} is not essential). Thus $X$ is a topological manifold.
\end{prop}
\begin{proof}
We use reverse induction. If $k=n$, then the conclusion follows from proposition \ref{prop10}. Assume the proposition is proved for $k\geq j+1$. We need to prove it for $k\geq j$. Let $\epsilon_{j+1}$, $\delta_{j+1}$ be the constants in proposition \ref{prop14} and proposition \ref{prop13} corresponding to $k\geq j+1$.

Just assume $k=j$.  By Gromov compactness, we can find small $a=a(n, v)>0$ so that $B(p_i, a)$ and $B(p, a)$ are all embedded in $\mathbb{C}^N$. Let $H_i, z^i_s, z_s$ be defined as in proposition \ref{prop10} (recall $z^i_s, z_s$ are defined in the beginning. $H_i$ is defined right above claim \ref{cl8}). Since $X$ is not necessarily a metric cone, $z_s$ are not necessarily homogeneous. However, by a compactness argument, if $\epsilon$ is small, we may assume that they are almost homogeneous in the sense that $|Z(z_s)-\alpha_sz_s|<\rho$. Here $\rho$ is an arbitrarily prescribed small number, $Z$ is the limit of the holomorphic vector field $Z_i$.

 Let $Z^i_j$ be zeros of $z^i_1, ..., z^i_j$, $\Sigma$ be the zeros of $z_1, .., z_j$.
Let us assume $H_i$ converges to a holomorphic vector field $H$ on $\Sigma$.  
Let $E$ be the Euclidean distance function on $\mathbb{C}^N$. We may assume that $(z_1, .., z_N)_i^{-1}(B_E(0, \lambda))\subset\subset B(p_i, \frac{a}{2})$, where $\lambda$ depends only on $n, v$. 
Let $S_i=\partial B_E(p_i, \lambda)\cap Z^i_j$,  $S=\partial B_E(p, \lambda)\cap \Sigma$.  Now we fix the value $\lambda=\lambda(n, v)$.

Pick a point $q\in S$. Consider points $S_i\ni q_i\to q$. If $\epsilon$ is sufficiently small, we can find $\delta^1_j, \delta^2_j$ depending only on $n, v$ so that for some $\delta^2_j<\delta^0_j<\delta^1_j$,  $B(q, \frac{\delta^0_j}{\epsilon_{j+1}})$ and $B(q_i, \frac{\delta^0_j}{\epsilon_{j+1}})$ are $\epsilon_{j+1}\delta^0_j$-Gromov-Hausdorff close to a geodesic ball in a metric cone which splits off $\mathbb{R}^{2j+2}$. By proposition \ref{prop8} and proposition \ref{prop13}, we find some open set $B(q_i, \delta_{j+1}\delta^0_j)\subset U_i\subset B(q_i, \delta^0_j)$ $(B(q, \delta_{j+1}\delta^0_j)\subset U\subset B(q, \delta^0_j))$ so that $U_i$ $(U)$ is biholomorphic to $\mathbb{D}^{j+1}\times \hat{Z}_i$ $(\hat{Z})$. Say the coordinate on $\mathbb{D}^{j+1}$ is given by $(z^i_1, .., z^i_j, w^i)$ $((z_1, .., z_j, w))$. Furthermore, $z^i_1(q_i) = \cdot\cdot\cdot = z^i_j(q_i) = w^i(q_i) =0$ $(z_1(q) = \cdot\cdot\cdot = z_j(q) = w(q) =0)$, where $w^i$ $(w)$ come from the splitting along gradient of distance to $p_i$ $(p)$, roughly speaking. We may assume that $z^i_s\to z_s$, $w^i\to w$.

 By the induction hypothesis, there exists a homeomorphism $\Phi_i$ from $U_i$ to $U$ respecting the holomorphic factor $\mathbb{D}^{j+1}$. Write the coordinate function $w^i = x_i+\sqrt{-1}y_i$ $(w = x+\sqrt{-1}y)$. By a compactness argument, we find some $c=c(n, v)>0$ so that if $\epsilon$ is small enough depending only on $n, v$, 
 \begin{equation}\label{0}-Re H_i(d_E(\cdot, 0))<-c<0, -Re H_i(x_i)<-c<0\end{equation} on $S_i$ for all large $i$. Let $G_i$ be the subset of $U_i$ which is given by the common zeros of $z^i_1, .., z^i_j, x_i$. We can project $G_i$ and $\Phi_i(G_i)$ to $S_i$ and $S$ by the flow generated by $Re H_i$ and $Re H$. From (\ref{0}), we see that the projections are all local homeomorphisms. Therefore, we have a homeomorphism from local parts of $S_i$ to $S$. This implies that $S$ is a topological manifold. Moreover, by simple ode argument, one can verify that this is a Hausdorff approximation. Furthermore, if we have two points which are close to each other on $S_i$, then we can connect them by the shortest geodesic on $M_i$ with small length. Therefore, the diameter is small. We project the curve to $S_i$ by using the flow generated by $H_i$. By ode argument, we see that the projected curve still has small diameter. This implies that there exists a function $\xi$ as in definition \ref{def2} so that $S_i$ are all $\xi$-connected for all sufficiently large $i$. Thus $S$ is also $\xi$-connected.

By applying the gluing theorem (proposition \ref{prop3}), we have a homeomorphism from $S_i$ to $S$ which is also a Hausdorff approximation. By using the flow generated by $-Re H_i$ and $-Re H$, we can extend this as a homeomorphism for domains bounded by $S_i$ and $S$ on $Z^i_j$ and $\Sigma$ . This is still a Hausdorff approximation. The product with $\mathbb{D}^j$ becomes a homeomorphism. Recall holomorphic vector field $W_m$ in claim \ref{cl6} is used to construct the biholomorphism in proposition \ref{prop8}. Since these holomorphic vector fields have a convergence subsequence, so this is a Hausdorff approximation. This completes the induction.
\end{proof}

Applying proposition \ref{prop3} again, we can glue local homeomorphisms to a global homeomorphism if $X$ is compact. The proof of theorem \ref{thm2} is complete.
\medskip
\end{proof}

\section{\bf{Proof of the main theorem}}
 Let $\mathcal{O}_d(M)$ denote polynomial growth holomorphic functions on $M$ with degree bounded by $d$. Let $\mathcal{O}_P(M) = \cup_{d>0}\mathcal{O}_d(M)$.
By choosing a large $D$, we may assume $\mathcal{O}_D(M)$ embeds $M$ to $\mathbb{C}^{N-1}$. Here $N = dim(\mathcal{O}_D(M))$. That is, we ignore the constant function in the holomorphic embedding.
 Consider a blow down sequence $(M_i, p_i, d_i) = (M, p, \frac{d}{r_i})\to(M_\infty, p_\infty, d_\infty)$ in the Gromov-Hausdorff sense. Here $r_i$ is a sequence increasing to infinity. 
\begin{prop}\label{prop15}
For any $d>0$, $dim(\mathcal{O}_d(M))=dim(\mathcal{O}_d(M_\infty))$. \end{prop}
\begin{proof}
The proof is in fact contained in \cite{[L3]}\cite{[L2]}.  We only give a sketch. First we prove $dim(\mathcal{O}_d(M))\leq dim(\mathcal{O}_d(M_\infty))$. Define an inner product $\langle f, g\rangle = \dashint_{B(p_i, 1)}f\overline{g}$ on $\mathcal{O}_d(M)$. Apply the three circle theorem and pass to limit for these functions. This concludes the proof of the first inequality. For details, see lemma $2$ of \cite{[L2]}.

For the reverse inequality, define a norm on $\mathcal{O}_d(M_\infty)$ by $\langle f, g\rangle = \dashint_{B(p_\infty, 1)}f\overline{g}$. Also define a norm on $\mathcal{O}_d(M)$ by $\langle u, v\rangle = \dashint_{B(p, 1)}u\overline{v}$. Let $f_1, .., f_s$ be a basis of $\mathcal{O}_d(M_\infty)$.
For sufficiently large $i$, we may lift $f_j(1\leq j\leq s)$ to $B(p_i, 1)$, say $f^i_j$. It is clear that $f^i_j$ are linearly independent.
We can find constants $c_{ijk}$ so that $F_{ik} = \sum\limits_jc_{ijk}f^i_j$ satisfies $\int_{B(p, 1)}F_{ik}\overline{F_{il}} = \delta_{kl}$. 
We look at the quotient \begin{equation}\label{eq45}\frac{\sup\limits_{B(p, \frac{r_i}{2})}|F_{ik}|}{\sup\limits_{B(p, \frac{r_i}{3})}|F_{ik}|}=\frac{\sup\limits_{B(p_i, \frac{1}{2})}|F_{ik}|}{\sup\limits_{B(p_i, \frac{1}{3})}|F_{ik}|}=\frac{\sup\limits_{B(p, \frac{r_i}{2})}|\sum\limits_jc_{ijk}f^i_j|}{\sup\limits_{B(p, \frac{r_i}{3})}|\sum\limits_jc_{ijk}f^i_j|}.\end{equation} By dividing by the supremum of $c_{ijk}$ (fix $i, k$), we may assume that the maximal coefficient in the last part of (\ref{eq45}) is equal to $1$. As $f^i_j$ are linearly independent, by a compactness argument, we find that for sufficiently large $i$, (\ref{eq45}) is bounded by $d+\epsilon$ for any $\epsilon>0$. Let $i\to\infty$ and apply the three circle theorem, the functions $F_{ik}$ converge to linearly independent functions on $\mathcal{O}_d(M)$. 

\end{proof}

Let us apply some argument in \cite{[DS2]}.
By dimension estimate for $\mathcal{O}_d(M)$, we can find a strictly increasing sequence $d_1, d_2, d_3, ...$ so that for any $d$ satisfying $d_s\leq d<d_{s+1}$, $\mathcal{O}_{d_s}(M)=\mathcal{O}_{d}(M)\neq \mathcal{O}_{d_{s+1}}(M)$. Let us choose $f_{s, l}\in \mathcal{O}_{d_s}(M) (l = 1, 2, .., l_s)$ so that they form a basis of $\mathcal{O}_{d_{s}}(M)/\mathcal{O}_{d_{s-1}}(M)$ as quotient of vector spaces. Set $W_s = $span$\{f_{s, 1}, .., f_{s, l_s}\}$. Let $f^i_{s, l}$ be an orthonormal basis of $W_s$, with respect to the $L^2$ integration on $B(p_i, 1)$. After taking subsequences, we may assume $f^i_{s, l}\to f^\infty_{s, l}$ uniformly on each compact set of $M_\infty$.
\begin{claim}\label{cl-082}
 $f^\infty_{s, l}$ is homogeneous of degree $d_s$. 
 \end{claim}
 \begin{proof}
First, $f^\infty_{s, l}\in\mathcal{O}_{d_s}(M_\infty)$ by three circle theorem. Second, for any $\epsilon>0$, there exists $R>0$ depending on $\epsilon$ so that for any function $u\in W_s$, $N(R, u) = \frac{\sup\limits_{B(p, 2R)}|u|}{\sup\limits_{B(p, R)}|u|}\geq 2^{d_s} - \epsilon$. To prove this, write $u = c\sum a_lf_{s, l}$ where $\sup |a_l| = 1$. We may assume $c=1$ by scaling. By definition of $W_s$, for each $u$, we can find $R_u$ so that $N(R_u, u)\geq 2^{d_s}-\frac{\epsilon}{2}$. By continuity, if $v = \sum b_lf_{s, l}$ and $|a_l-b_l|$ is sufficiently small, $N(R_u, v)\geq 2^{d_s}-\epsilon$. Three circle theorem implies that $N(r, u)$ monotonic increasing. Now the second point follows from the compactness of $\mathbb{CP}^{l_s-1}$ (we are thinking the coefficients lives in $\mathbb{CP}^{l_s-1}$). We can apply the argument of proposition $5$ in \cite{[L5]} to finish the proof of the claim.

\end{proof}

Let $Z$ be the vector space of holomorphic vector fields $X$ on $M$ so that $X(\mathcal{O}_d(M))\subset \mathcal{O}_d(M)$ for all $d$. This means for any $f\in\mathcal{O}_d(M)$, the derivative $X(f)\in\mathcal{O}_d(M)$. Finite generation of $\mathcal{O}_P(M)$ and linear algebra imply $Z$ has finite dimension. Similarly, let $Z_\infty$ be the vector space of holomorphic vector fields $Y$ on $M_\infty$ so that $Y(\mathcal{O}_d(M_\infty))\subset \mathcal{O}_d(M_\infty)$ for all $d$.

Let $f_1, .., f_N$ be a basis for $\mathcal{O}_D(M)$. Assume $f^i_1, .., f^i_N$ be a new basis so they are orthonormal with respect to the $L^2$ integration on $B(p_i, 1)$.
Let $X_1, ..., X_k$ be a basis of $Z$. We can find new basis $X^i_1, ..., X^i_k$ so that they are orthnormal with respect to the Hermitian inner product defined by $\langle X_a, \overline X_b\rangle_i = \dashint_{B(p_i, 1)}\sum\limits_{j=1}^N \langle X_a(f^i_j), \overline{X_b(f^i_j)}\rangle$. We can similarly define a Hermitian inner product on $Z_\infty$ by $\langle Y_a, \overline Y_b\rangle_\infty = \dashint_{B(p_\infty, 1)}\sum\limits_{j=1}^N \langle Y_a(f^\infty_j), \overline{Y_b(f^\infty_j)}\rangle$. Here $f^\infty_j$ is the limit of $f^i_j$.

\begin{definition}For any fixed $R>0$, 
We say vector fields $X_i$ on $B(p_i, 2R)$ converges to $X_\infty$ on $B(p_\infty, 2R)$, if for and $d>0$, any $f_i\in\mathcal{O}_d(M_i)$ with $f_i\to f_\infty$ on $B(p_\infty, 2R)$, $X_i(f_i)\to X_\infty(f_\infty)$ uniformly on $B(p_\infty, R)$. 
\end{definition}

It is clear from the definition that the Hermitian inner product $\langle \cdot, \cdot \rangle_i$ on $Z$ converges to the inner product $\langle \cdot, \cdot \rangle_\infty$ on $Z_\infty$.
Note by three circle theorem, after taking subsequence, $X^i_1, .., X^i_k$ converge to $X^\infty_1, X^\infty_2, ..., X^\infty_k$ on $M_\infty$. Moreover, $X^\infty_j(\mathcal{O}_d(M_\infty))\subset \mathcal{O}_d(M_\infty)$ for all $d$. Note corollary \ref{cor00} states $-Re Z_i\to -r\frac{\partial}{\partial r}$ (recall $Z_i$ was defined in (\ref{eq0})). 

The following claim is crucial. The argument is in the same spirit as claim $6.1$ of \cite{[L3]}.
\begin{claim}\label{cl-45}
The complexification of $-r\frac{\partial}{\partial r}$ is in the span of $X^\infty_1, ..., X^\infty_k$.
\end{claim}
 \begin{proof}
It is clear that the complexification of $-r\frac{\partial}{\partial r} \in Z_\infty$.
Assume the claim is not true. After orthogonalization via $\langle \cdot, \cdot \rangle_i$, we can find a basis $X^i_1, .., X^i_k, X^i_{k+1}$ of span of $X_1, .., X_k, Z_i$ so that $X^i_j\to X^\infty_j$ for all $j= 1, .., k+1$.
Here we may require $X^i_j (1\leq j\leq k)$ be the same as defined above the claim. We further require that $X^\infty_1, .., X^\infty_{k+1}$ be linearly independent.

We can also just diagonalize the span of $X_1, ..., X_k, Z_i$ on $B(p, 1)$. This is just given by the $L^2$ integration on $B(p, 1)$. Say the new basis is given by $Z_1^i, .., Z^i_{k+1}$ and $Z_j^i = \sum\limits_{s=1}^{k+1} a_{ijs}X^i_s$.
We assert that by taking subsequence, $Z^i_1, ..., Z^i_{k+1}$ converge uniformly, as holomorphic vectors on each compact set of $M$, to holomorphic vector fields in $Z$. If this is proved, we have a contradiction with that $dim(Z) = k$.

To prove the assertion,
let $f\in\mathcal{O}_d(M)$. Let $s_j^i = \frac{\max\limits_{B(p_i, 2)}|Z_j^i(f)|}{\max\limits_{B(p_i, 1)}|Z_j^i(f)|}=\frac{\max\limits_{B(p_i, 2)}|(\sum a_{ijs}X^i_s)(f_i)|}{\max\limits_{B(p_i, 1)}|(\sum a_{ijs}X^i_s)(f_i)|}$. Here $f_i = c_if$ where $c_i$ is a constant so that the $L^2$ norm of $f_i$ is $1$ on $B(p_i, 1)$. Assume $f_i\to f_\infty$.
We can also find a constant $c_i'$ so that $b_{ijs} = c_i'a_{ijs}$ satisfy $\max\limits_{s} |b_{ijs}| = 1$. Say $b_{ijs}\to b_{\infty js}$.

\medskip

Case 1:

\medskip

If $\sum b_{\infty js}X^\infty_s(f_\infty)\neq 0$, $s^i_j\leq 2^d+\epsilon$ for all sufficiently large $i$. Three circle theorem implies that $Z_j^i(f)$ converges to a function in $\mathcal{O}_d(M)$.

\medskip

Case 2:

\medskip

If $\sum b_{\infty js}X^\infty_s(f_\infty)= 0$, then for some large $e$, we can just find $g_i\in W_e$ so that $\int_{B(p, 1)} |g_i|^2 = 1$ and after normalization on $B(p_i, 1)$ ($h_i$ is constant so that $\hat{g}_i = h_ig_i$ satisfies $\int_{B(p_i, 1)} |\hat{g}_i|^2 = 1$), $\hat{g}_i\to g_\infty$ on $B(p_\infty, 1)$ and $\sum b_{\infty js}X^\infty_s(g_\infty)\neq 0$. By using the argument in case $1$,
after passing to subsequence, we may assume $g_i\to g\in W_e, Z^i_j(g_i)\to u_j\in\mathcal{O}_{d_e}(M), Z^i_j(g_if)\to v_j\in\mathcal{O}_{d+d_e}(M)$.
Now $Z^i_j(f) = \frac{Z_j^i(fg_i) - Z^i_j(g_i)f}{g_i} \to \frac{v_j-fu_j}{g}$. Note the convergence is uniform on each compact set. There is no problem near the zero of $g_i$ or $g$ (just apply the Cauchy estimate).
\begin{lemma}
$\mu_j=\frac{v_j-fu_j}{g}\in\mathcal{O}_d(M)$.
\end{lemma}
\begin{proof}
Observe the numerator has order $d+d_e$. We need the following result in \cite{[Mo]}.
\begin{prop}[Mok]
Let $f, g$ be polynomial growth holomorphic functions on a complete K\"ahler manifold $M$ with $Ric\geq 0$. Suppose $h=\frac{f}{g}$ is holomorphic, then $h$ is of polynomial growth.
\end{prop}
\begin{proof}
Let us say $f(p), g(p)\neq 0$.
Set $F_1(x) = \log |f(x)|^2+\int_{B(p, R)}G_R(x, y)\Delta\log |f(y)|^2, F_2(x) = \log |g(x)|^2+\int_{B(p, R)}G_R(x, y)\Delta\log |g(y)|^2$.
\begin{lemma}
For large $R$ and $i = 1, 2$, on $B(p, \frac{R}{2})$, $-C\log R\leq F_i(x)\leq C\log R$.
\end{lemma}
\begin{proof}
It is clear that $F_i(x)$ is harmonic on $B(p, R)$. Now maximum principle says that $F_i(x)\leq C\log R$ on $B(p, R)$. Let $H_i  = C\log R-F_i\geq 0$. Then gradient estimate implies that on $B(p, \frac{3}{4}R)$, $|\nabla \log H_i|\leq \frac{C_1}{R}$. Observe $H_i(p)\leq C\log R$.  Then the harnack inequality implies that $H_i\leq C_2\log R$ on $B(p, \frac{R}{2})$. This completes the proof of the claim.
\end{proof}
It is clear that on $B(p, \frac{R}{2})$, $\log |h(x)|^2 \leq F_1(x)-F_2(x)\leq C\log R$ ($C$ is independent of $R$). The proof of the proposition is complete.\end{proof} 

We come back to the proof of the lemma.
Let us assume $a$ is the smallest number so that $\mu_j\in\mathcal{O}_a(M)$. Then three circle theorem says $\lim\limits_{r\to\infty}\frac{M(\mu_j, 2r)}{M(\mu_j, r)} = 2^a$ where $M(\mu_j, r) = \sup\limits_{B(p, r)}|\mu_j|$.
 Assume the lemma is not true. Then $a>d$. As $g\in W_e$, we can apply claim \ref{cl-082}. After normalizing the functions $\mu_j$ and $g$ on $B(p_i, 1)$ and taking limits, we find their product of the limits, converges to a  homogeneous function of degree $a+d_e$ on $M_\infty$. However, this contradicts that $v_j-fu_j\in\mathcal{O}_{d+d_e}$.

We conclude that $Z^i_j(f)$ converges to a holomorphic function of degree $d$, for any $f\in\mathcal{O}_d(M)$. This implies that $Z^i_j$ converges to an element in $Z$. The assertion is proved. 
\end{proof}

The proof of claim \ref{cl-45} is complete.
\end{proof}

Recall $\mathcal{O}_D(M)$ embeds $M$ to $\mathbb{C}^{N-1}$. 
By applying claim \ref{cl-082},  we can find basis $f^j_i$ of $\mathcal{O}_D(M)$ so that they are almost orthonormal on $B(p_i, 1)$ and the limits are all homogeneous. Let us take for granted that $f^N_i = 1$ (this is the constant function in $\mathcal{O}_D(M)$. Given claim \ref{cl-45}, we can find $X_i\in Z$ so that $X_i$ converges to the complexification of $-r\frac{\partial}{\partial r}$ on any compact set of $M_\infty$.
In particular, $X_i(f^j_i)\to 2(-r\frac{\partial }{\partial r}) f^j_\infty = -2d(f^j_\infty)f^j_\infty$. In the last equality, we have used that $f^j_\infty$ are homogeneous. By using the basis $f^j_i$ of $\mathcal{O}_D(M)$, we find the action of $X_i$ on $\mathcal{O}_D(M)$ is given by \begin{equation}\label{eq-34}X_i(\vec{P_i}) = A_i\vec{P_i} + \vec{C_i},\end{equation} where $\vec{P_i}$ is the column vector $(f^1_{i}, .., f^{N-1}_i)^T$, $\vec{C_i}$ is a constant $(N-1)\times 1$ vector.
$A_i$ is a constant $(N-1)\times (N-1)$ matrix (depending on $i$) which satisfies that all real parts of diagonal elements are less than or equal to $-\frac{1}{2}$ (recall that the degree of a nonzero homogeneous holomorphic function on $M_\infty$ is at least $1$), also the off diagonal elements are very small. In particular, all real part of eigenvalues of the matrix are strictly negative.
Let us fix a sufficiently large $i_0$. We find the corresponding $X=X_{i_0}\in Z$.
\begin{claim}
$X$ is an integrable vector field on $M$. Moreover, $X$ retracts $M$ to a point.
\end{claim}
\begin{proof}
This is just linear algebra. Indeed, the action of $X$ on $M$ could be seen from $(\ref{eq-34})$.
We extend the vector field in the natural way to $\mathbb{C}^{N-1}$ which we still call $X$.
Since all real part of eigenvalues are strictly negative, the flow $\sigma_t$ generated by $X$ must retract $\mathbb{C}^{N-1}$ to a point, say $o$. Then $o\in M$.

\end{proof}

To complete the proof of the theorem, we can just apply the Poincare-Dulac normal coordinate \cite{[JKS]}.
The result (page $1190$ of \cite{[JKS]}) says that we can find a local holomorphic chart $U = U(z_1, ..., z_n)$ near $o$(the unique fixed point) so that $U$ is the unit ball in $\mathbb{C}^n$ (measured in Euclidean coordinate $(z_1, ..., z_n)$) and $X = -\sum\limits_{j=1}^n (\lambda_jz_j+g_j(z))\frac{\partial}{\partial z_j}$,
where 
\begin{itemize}
\item $0<Re \lambda_1 \leq Re \lambda_2 \leq \cdot\cdot\cdot\leq Re\lambda_n$
\item $g_1\equiv 0$
\item For every $j\in\{2, ..., n\}$, $g_j(z)$ is a polynomial of $z_1, .., z_{j-1}$ only, vanishing at the origin. If the identity $\lambda_j = \sum\limits_{k=1}^{j-1}m_k\lambda_k$ holds for some nonnegative integers $m_k$, then the condition $g_j(e^{\lambda_1t}z_1, ..., e^{\lambda_{j-1}t}z_{j-1}) = e^{\lambda_jt}g_j(z_1, ..., z_{j-1})$. If $\lambda_j = \sum\limits_{k=1}^{j-1}m_k\lambda_k$ never hold for nonnegative integers $m_k$, $g_j = 0$.

\end{itemize}

On $M'=\mathbb{C}^n$, we can define a holomorphic vector $\hat{X} = -\sum\limits_{j=1}^n (\lambda_jz_j+g_j(z))\frac{\partial}{\partial z_j}$, where $g_j$ is the same polynomial as in $X$. By ode, one can prove that $\hat{X}$ is integrable. 
Let $\hat\sigma_t(z)$ be the flow generated by $\hat{X}$ on $\mathbb{C}^n$. Then one can verify that $\hat\sigma_t$ is a retracting holomorphic vector field on $M'$ with the origin as the unique fixed point.

$U$ is an open set of $M$. Let us identify it with the unit ball in $M'=\mathbb{C}^n$. 
Define a map $F: M'=\mathbb{C}^n\to M$ as follows:
Given any $z\in M'=\mathbb{C}^n$, we can find sufficiently large $t$ so that $\hat\sigma_t(z)$ is contained in the unit ball. Then define $F(z) = \lim\limits_{t\to+\infty}\sigma_{-t}(\hat\sigma_t(z))$. Since the vector field $\hat{X}$ on the unit ball of $M'$ is the same as $X$ in $\mathbb{C}^n$, we obtain that $F(z)$ is well defined (independent of the value of $t$, as $t$ is sufficiently large). It is clear that $F$ is holomorphic and invertible: $F^{-1}(y) = \lim\limits_{t\to+\infty}\hat\sigma_{-t}(\sigma_t(y))$. Thus $M$ is biholomorphic to $\mathbb{C}^n$.

Now let us check that these coordinate functions $z_1, ..., z_n$ are of polynomial growth on $M$. 
 We use induction on the degree of $Re\lambda_s$.
Assume $z_s$ are all of polynomial growth for $Re\lambda_s\leq h$.  Let $h_1>h$ be so that there exists some $j$ with $Re\lambda_j = h_1$ while there is no $Re\lambda_j$ between $h$ and $h_1$.
Assume for $j = j_1, .., j_1+k-1$, $z_j$ satisfy $Re\lambda_j = h_1$. 

Let $\mathcal{O}'_D(M)$ be the subset of $\mathcal{O}_D(M)$ which vanish at $o$ (recall this is the unique fixed point of the flow generated by $X$). Then $X(\mathcal{O}'_D(M)) \subset \mathcal{O}'_D(M)$. Let $f_1, .., f_{N-1}$ be the basis of the Jordan form for the action of $X$ on $\mathcal{O}'_D(M)$. We claim that each $f_s$ is a polynomial of $z_1, .., z_n$. Given a monomial $z_1^{i_1}\cdot\cdot\cdot z_n^{i_n}$, define the weight $w$ as $\lambda_1i_1+\cdot\cdot\cdot+\lambda_ni_n$. Since $Re\lambda_s>0$, given any $c\in\mathbb{R}$, there are at most finitely many monomials (up to a factor) so that the real part of $w$ is no greater than $c$. Note the action of $X$ on monomials preserves the weight. Let $V_w$ be the span of monomials with weight $w$. Then each $V_w$ is finite dimensional.

Assume $f_s$ (generalized eigenvector) corresponds to eigenvalue $\lambda$. By Taylor expansion at $o$ and Cayley-Hamilton theorem, 
we see $f_s\in V_\lambda$. In particular, $f_s$ is a polynomial of $z_1, .., z_n$. Since $f_1, .., f_{N-1}$ gives the embedding of $M$ to $\mathbb{C}^{N-1}$, we can always find $f_{l_1}, ..., f_{l_k}$ so that $\det(\frac{\partial f_{l_s}}{\partial z_{j}})|^{j = j_1, ..., j_1+k-1}_{s=1, .., k}\neq 0$ at $0$. In particular, these $f_{l_s}$ must satisfy that the real part of the eigenvalue is equal to $h_1$. According to induction, there exists an invertible $k\times k$ matrix $A$ so that 
$f_{l_s} =\sum_j A_{sj}z_j+B_s$, where each $B_s$ has polynomial growth. Thus $z_j$ has polynomial growth. The induction is completed.

As any function in $\mathcal{O}_D(M)$ is a polynomial of $z_1, .., z_n$, we see that $\mathcal{O}_P(M)$ is generated by $n$ polynomial growth holomorphic functions $z_1, .., z_n$. We can say in this way, $M$ is isomorphic to $\mathbb{C}^n$.
Thus the main theorem is proved.

\end{proof}

\end{document}